\numberwithin{equation}{section}
\theoremstyle{plain}
\newtheorem{theorem}{Theorem}[section]
\newtheorem{lemma}{Lemma}[section]
\theoremstyle{definition}
\theoremstyle{remark}
\newtheorem{remark}{Remark}[section]
\newcommand{\eps}{\epsilon}
\newcommand{\Real}{\mathbb R}
\newcommand{\mvz}{\Real^{2N}_*}
\newcommand{\eb}{{\mathbf e}}
\newcommand{\fb}{{\mathbf f}}
\newcommand{\rb}{{\mathbf r}}
\newcommand{\ub}{{\mathbf u}}
\newcommand{\etab}{{\bm \eta}}
\newcommand{\xib}{{\bm \xi}}
\newcommand{\vb}{{\mathbf v}}
\newcommand{\wb}{{\mathbf w}}
\newcommand{\yb}{{\mathbf y}}
\newcommand{\Ds}{\mathcal{V}_0}
\newcommand{\pd}[2]{\frac{\partial #1}{\partial #2}}
\newcommand{\E}{{\mathcal E}}
\newcommand{\R}{{\mathcal R}}
\newcommand{\lpnorm}[2]{\left\|#1\right\|_{\ell^{#2}_\eps}}
\begin{document}

\title[Stability, Instability, and Error of the Force-based QC Approximation]{Stability, Instability, and Error of the \\
  Force-based Quasicontinuum Approximation }

\author{Matthew Dobson}
\author{Mitchell Luskin}
\author{Christoph Ortner}

\address{Matthew Dobson\\
School of Mathematics \\
University of Minnesota \\
206 Church Street SE \\
Minneapolis, MN 55455 \\
U.S.A.}
\email{dobson@math.umn.edu}

\address{Mitchell Luskin \\
School of Mathematics \\
University of Minnesota \\
206 Church Street SE \\
Minneapolis, MN 55455 \\
U.S.A.}
\email{luskin@umn.edu}

\address{Christoph Ortner\\
Mathematical Institute \\
University of Oxford \\
24-29 St Giles'\\
Oxford OX1 3LB \\
UK}
\email {ortner@maths.ox.ac.uk}

%%%%%%%%%%%%%%%%%%%
% Acknowledgments %
%%%%%%%%%%%%%%%%%%%
\thanks{
This work was supported in part by DMS-0757355,
 DMS-0811039,  the Department of Energy under Award Number
DE-FG02-05ER25706, the Institute for Mathematics and
Its Applications,
 the University of Minnesota Supercomputing Institute,
the University of Minnesota Doctoral Dissertation Fellowship,
and the EPSRC critical mass programme ``New Frontier in
the Mathematics of Solids.''
}

%%%%%%%%%%%%
% Keywords %
%%%%%%%%%%%%
\keywords{quasicontinuum, force-based, atomistic to continuum,
  stability, coercivity, error estimates}

%%%%%%%%%%%%%%%%%
% Subject class %
%%%%%%%%%%%%%%%%%
\subjclass[2000]{65Z05,70C20}

%%%%%%%%
% Date %
%%%%%%%%
\date{\today}
\begin{abstract}
  Due to their algorithmic simplicity and high accuracy, force-based
  model coupling techniques are popular tools in computational
  physics.  For example, the force-based quasicontinuum approximation
  is the only known pointwise consistent quasicontinuum (QC)
  approximation for coupling a general atomistic model with a finite
  element continuum model.  In this paper, we present a detailed
  stability and error analysis of this method.  Our optimal order
  error estimates provide a theoretical justification for the high
  accuracy of the force-based QC approximation: they clearly
  demonstrate that the computational efficiency of continuum modeling
  can be utilized without a significant loss of accuracy if defects
  are captured in the atomistic region.

  The main challenge we need to overcome is the fact (which we prove)
  that the linearized QC operator is typically {\em not} positive
  definite. Moreover, we prove that no uniform inf-sup stability
  condition holds for discrete versions of the $W^{1,p}$-$W^{1,q}$
  ``duality pairing'' with $1/p+1/q=1$, if $1 \leq p < \infty$.  We
  therefore derive an inf-sup stability condition for a discrete
  version of the $W^{1,\infty}$-$W^{1,1}$ ``duality pairing'' which
  then leads to optimal order error estimates in a discrete
  $W^{1,\infty}$-norm.
\end{abstract}

{

\maketitle
\thispagestyle{empty}
\section{Introduction}
Localized defects in materials typically interact with elastic fields
far beyond the defects' atomic neighborhood.  Accurately computing the
structure of localized defects requires the use of atomistic models;
however, atomistic models are too computationally demanding to be
utilized for the entire interacting system.  The goal of atomistic to
continuum coupling methods such as the quasicontinuum (QC) method is
to use the computationally intensive, fully atomistic calculations
only in regions with highly non-uniform deformations such as
neighborhoods of dislocations, crack tips, and grain boundaries; and
to use a (local) continuum model in regions with nearly uniform
deformations to reduce the number of degrees of freedom.

The initial computational results obtained with the QC method have
excited the materials science community with the promise of the
simulation of heretofore inaccessible multiscale materials problems
\cite{tadmor_qc_first,tadmor_miller_qc_overview,knap2003}.  Variants
of the QC method have continued to be developed with the introduction
of adaptive methods, improved mesh generation, and faster solvers
\cite{gaviniorbital,lu06,arndtluskin07c,dobsonluskin08,arndtluskin07b,PrudhommeBaumanOden:2005,shenoy_gf};
yet, in common with many other multiscale methods it lacks the
theoretical basis needed to give predictive computational results.

During the past few years, a mathematical structure has been given to
the description and analysis of various flavors of the QC method,
clarifying the relation between different approximations and the
corresponding sources of
error
~\cite{dobsonluskin07,BadiaParksBochevGunzburgerLehoucq:2007,minge05,pinglin03,Gunzburger:2008a,Gunzburger:2008b,emingyang,dobsonluskin09,pinglin05,ortnersuli,ezhang06,luskinortner_cluster}. In
the present paper, we contribute to this effort by providing a detailed
stability and error analysis of the {\em force-based quasicontinuum
approximation}.

Considerable concern has been generated by the discovery that early QC
approximations exhibit ``ghost'' forces in the atomistic to continuum
interface when the material is subject to a uniform strain, that is,
they do not satisfy the ``patch test'' criterion of computational
mechanics.  The first remedy, which is still commonly employed, is
known as the ghost force correction.  It first applies a dead load
that corrects the ghost forces at the current state of a continuation
process, then increments the parameter value for the process, then
reminimizes the energy at the new parameter value and dead load, and
finally recomputes the ghost force corrections for use as a dead load
at the next step of the continuation
process~\cite{shenoy_gf,tadmor_miller_qc_overview}.  In
\cite{dobsonluskin07,dobsonluskin08} this process was identified as an
iterative method to approximate the solution to
the equilibrium equations for a purely force-based
coupling approach, which we label the {\em force-based quasicontinuum
  (QCF) approximation}. This formulation of the QCF method has enabled
the development of more efficient iterative and continuation methods
for its solution and a more precise understanding of the
error~\cite{dobsonluskin07,dobsonluskin08}. Related force-based
modeling approaches, which couple an atomistic region with a continuum
region modeled by linear elasticity can be found
in~\cite{kohlhoff,shilkrot}.

Other research groups have proposed QC approximations that utilize
special interfacial atoms at the atomistic to continuum interface in
an attempt to develop an energy-based QC method that does not suffer
from the ghost force problem mentioned above~\cite{ezhang06,shim04}.
The quasi-nonlocal (QNL) approach~\cite{shim04} is easy to implement
and removes ghost forces for short range interactions (depending on
the lattice structure), but ghost forces remain for longer range
interactions.  This method was generalized in the reconstruction
approach~\cite{ezhang06} which, in theory, allows for the elimination
of all ghost forces; however, explicit methods have only been
constructed for planar interfaces so far. Moreover, a computationally
efficient implementation of this method, that can be used with
adaptive atomistic to continuum modeling algorithms, has yet to be
proposed.

Both of the above methods~\cite{ezhang06,shim04} couple the original
atomistic model to a new atomistic model with local interactions.  To
allow for the reduction of degrees of freedom by piecewise linear
interpolation in the continuum region as in the finite element method,
it is necessary to further couple this local atomistic model to a
volume-based local model.  However, it is not known how to couple a
local atomistic model to a volume-based local model along a nonplanar
interface without introducing ghost forces~\cite{ezhang06}.  In
contrast, the force-based quasicontinuum approximation allows
arbitrary atomistic to continuum interfaces and coarsening without
ghost forces.

Rather than computing forces from a total energy, the force-based
quasicontinuum approximation directly assigns forces using a simple
rule: {\em the force on an atom in the atomistic region is computed
  from the force law of the atomistic model, while the force on a
  degree of freedom in the continuum region is computed from the force
  law of the continuum (finite element)
  approximation}~\cite{dobsonluskin07,dobsonluskin08}.  There is no
modification of these equations near the atomistic to continuum
interface and it is therefore easy to see that the QCF equilibrium
equations are satisfied exactly by a material under uniform strain,
that is, there are no ghost forces in this approximation. Moreover, we
will show below that the QCF approximation has an O($\eps^2$)
truncation error in the atomistic to continuum interface for all
smoothly varying strains.  By contrast, it has been shown
in~\cite{dobsonluskinopt} that, even when it succeeds in removing
ghost forces, the QNL method has an O($1$) truncation error in the
atomistic to continuum interfaces for a nonuniform but smooth strain.
(This is nevertheless a significant improvement over the O($1/\eps$)
truncation error in the original QC method.)

With the exception of \cite{ortnersuli}, error analyses of
energy-based QC methods have utilized the coercivity
(positive-definiteness) of the linearization of the quasicontinuum
equilibrium equations about the energy-minimizing
solution~\cite{emingyang,dobsonluskinopt,minge05}.  A recent attempt
to establish an error analysis for the QCF method has presented an
invalid proof of coercivity of the linearized equilibrium equations
and an error analysis based on this incorrect coercivity
result~\cite{ming_gf}.  In the present paper we prove that, typically,
the linearization of the QCF equilibrium equations is {\em not}
coercive (cf. Theorem \ref{th:nocoerc}), and consequently, our error
analysis will be based on a more general inf-sup stability
condition. However, even this more general approach will fail unless
one chooses the norms particularly carefully. We show in
Section~\ref{sec:stability} that the linearized QCF operator is stable
with respect to a discrete version of the $W^{1,\infty}$-$W^{1,1}$
pairing, uniformly with respect to the number of atoms, but we show in
Section~\ref{sec:norms} that it is {\em not} uniformly stable with
respect to any other $W^{1,p}$-$W^{1,q}$ pairing where $\frac{1}{p} +
\frac{1}{q} = 1.$

Our goal in this paper is to clearly present our techniques in the
simplest setting.  For this reason, we restrict our presentation to a
one-dimensional chain of atoms which interact with nearest and
next-nearest neighbors.  To further simplify the setting, we consider
a linearization of the force-based equilibrium equations about a
uniform strain.  Although the QCF approximation can be directly
formulated and implemented with mesh coarsening in the continuum
region, we only consider the modeling error due to the QCF
approximation itself and do not consider the coarsening error.  Each
of these extensions deserve a careful analysis in order to firmly
establish the mathematical foundation of the force-based
quasicontinuum approximation.

The main result of the present paper is that the strain error for the
QCF approximation is O($\eps^2$), where $\eps$ is the lattice spacing
scaled by the material dimension. The prefactor for the $\eps^2$ error
term is a maximum norm of a third divided difference of the displacement
restricted to the continuum region only.  Thus, our analysis predicts
the observed high accuracy of the QCF method when defects are modeled
in the atomistic region.

In Section~\ref{sec:model}, we present a detailed description of the
force-based quasicontinuum approximation and a first estimate of the
truncation error with respect to the fully atomistic model.  In
Section~\ref{sec:conjugate}, we show how to formulate the QCF
approximation in a ``weak form'' that allows us to study its stability
by considering discrete versions of the $W^{1,p}$-$W^{1,q}$ ``duality
pairing.''  This is equivalent to putting the QCF operator into a
divergence form, which will indicate an interesting nonlocal effect of
the atomistic to continuum interface. This nonlocal effect is the
source of the lack of coercivity which we establish in
Section~\ref{sec:nocoerc}, based on the explicit construction of an
unstable displacement.  In Section~\ref{sec:stability}, we derive
inf-sup stability results that are then combined, in
Section~\ref{sec:converge}, with negative-norm truncation error
estimates, to obtain optimal order error estimates for the force-based
quasicontinuum approximation.  We conclude, in
Section~\ref{sec:norms}, by showing the lack of a uniform inf-sup
constant for all other common choices of duality pairings.

\section{The Force-Based Quasicontinuum Approximation}
\label{sec:model}

We consider a one-dimensional atomistic chain whose $2M+1$ atoms
occupy the reference positions $x_j = j\eps $, where $\eps$ is the
atomic spacing in the reference configuration, and which interact with
their nearest and next-nearest neighbors. We denote the deformed
positions by $y_j$ for $j=-M,\dots,M$. The boundary atoms are constrained by
\begin{equation*}
y_{-M}=-FM\eps\quad\text{and}\quad y_M=FM\eps,
\end{equation*}
where $F>0$ is a macroscopic deformation gradient.  The total energy
of a deformation ${\bf y} \in \mathbb{R}^{2N+1}$ is given by
\begin{equation}\label{atoma}
  \E^a(\yb) -\sum_{j=-M}^{M}\eps f_j y_j,
\end{equation}
where
\begin{equation}\label{atomaa}
\E^a(\yb)=
\sum_{j=-M+1}^{M} \eps \phi\Big(\frac{y_{j} - y_{j-1}}{\eps} \Big)
+ \sum_{j = -M+2}^{M} \eps \phi\Big(\frac{y_{j} - y_{j-2}}{\eps}\Big)
\end{equation}
for a scaled two-body interatomic potential $\phi$ (for example,
the normalized Lennard-Jones potential $\phi(r)
= \eps^{12} r^{-12}-2 \eps^6 r^{-6}$)
and external forces $f_j$.  The equilibrium equations are given by the
force balance at the free atoms,
\begin{equation}
\label{full}
\begin{aligned}
F_j^{a}(\yb) + f_j&=0&\quad&\text{for} \quad j = -M+1, \dots, M-1,\\
y_{j}&=Fj\eps&\quad&\text{for} \quad j = -M,\,M,
\end{aligned}
\end{equation}
where the atomistic force (per lattice spacing $\eps$) is given by
\begin{equation} \label{atomforce}
\begin{split}
F_{j}^a(\mathbf y) :=-\frac{1}{\eps}\frac{\partial \E^a(\yb)}{\partial y_j}
&=\frac{1}{\eps}\Bigg\{\left[\phi'\left( \frac{y_{j+1} - y_{j}}{\eps} \right)
+ \phi'\left(\frac{y_{j+2} - y_{j}}{\eps}\right)\right]\\
&\qquad\qquad-\left[\phi'\left( \frac{y_{j} - y_{j-1}}{\eps} \right)
+ \phi'\left(\frac{y_{j} - y_{j-2}}{\eps}\right)\right]\Bigg\}.
\end{split}
\end{equation}
In \eqref{atomforce} the undefined terms
$\phi'(\eps^{-1}(y_{-M+1}-y_{-M-1}))$ and
$\phi'(\eps^{-1}(y_{M+1}-y_{M-1}))$ are taken to be zero.

We let $u_j$ be a perturbation from the uniformly deformed state
$y_j^F = Fj\eps ,$ that is, we define
\begin{align*}
  & u_j = y_j - F j\eps  \quad \text{ for } j = -M,\dots,M.
\end{align*}
We linearize the atomistic equilibrium equations \eqref{full} about
the deformed state $\yb^F,$ resulting in
\begin{equation}
\label{atom}
\begin{aligned}
(L^{a} \ub^a)_j&=f_j&\quad&\text{for} \quad j = -M+1, \dots, M-1,\\
u^{a}_{j}&=0&\quad&\text{for} \quad j = -M,\,M,
\end{aligned}
\end{equation}
where $(L^a\vb)_j$, for a displacement $\vb \in \mathbb{R}^{2M+1}$, is
given by
\begin{equation*}
(L^{a} \vb)_j:=
\begin{cases}
\displaystyle
\phi''_F
	\left[\frac{-v_{j+1} + 2 v_{j} - v_{j-1}}{\eps^2} \right]
	+ \phi''_{2F}
	\left[\frac{-v_{j+2} + v_{j}}{\eps^2} \right], & j = -M+1, \\[10pt]
\displaystyle
\phi''_F
	\left[\frac{-v_{j+1} + 2 v_{j} - v_{j-1}}{\eps^2} \right]
	+ \phi''_{2F}
	\left[\frac{-v_{j+2} + 2 v_{j} - v_{j-2}}{\eps^2} \right], & j = -M+2,\dots,M-2, \\[10pt]
\displaystyle
\phi''_F
	\left[\frac{-v_{j+1} + 2 v_{j} - v_{j-1}}{\eps^2} \right]
	+ \phi''_{2F}
	\left[\frac{v_{j} - v_{j-2}}{\eps^2} \right], &j = M-1.
\end{cases}
\end{equation*}
Here and throughout we define $\phi''_{F} := \phi''(F)$ and
$\phi''_{2F} := \phi''(2F)$, where $\phi$ is the interatomic potential
in~\eqref{atomaa}.  We assume that $\phi''_F > 0$ and $\phi''_{2F} <
0,$ which holds for typical pair potentials such as the Lennard-Jones
potential under physically relevant deformations. We remark that, for
$\phi_F'' + 4 \phi_{2F}'' > 0$, the system \eqref{atom} has a unique
solution. This follows from \eqref{eq:defn_Ea} and from
Lemma~\ref{rdd_plus} (see also \cite{dobsonluskinopt,sharpstability}
for an analysis of the periodic case which is similar).

The local QC approximation uses the Cauchy-Born extrapolation rule to
approximate the nonlocal atomistic model by a local continuum
model~\cite{dobsonluskin07,minge05,tadmor_miller_qc_overview,tadmor_qc_first}.
In our context, this corresponds to approximating $y_{j}-y_{j-2}$ in
\eqref{atomaa} by $2(y_j-y_{j-1})$ and results in the local QC energy
\begin{equation}\label{lqcen}
\E^{lqc}(\yb)= \sum_{j = -M+1}^M \eps \left[\phi\left(\frac{y_j - y_{j-1}}{\eps}\right)
+ \phi\left( \frac{2(y_{j} - y_{j-1})}{\eps} \right) \right].
\end{equation}
Note that the above expression has one more next-nearest neighbor term
than~\eqref{atomaa}.  This is because the atoms at $j= -M+1, M-1$
do not feel the effect of the boundary in the local approximation.
The local quasicontinuum equilibrium equations are then given by
\begin{equation*}
\begin{aligned}
F_j^{lqc}(\yb) + f_j&=0&\quad&\text{for} \quad j = -M+1, \dots, M-1,\\
y_{j}&=Fj\eps& \quad&\text{for} \quad j = -M,\,M,
\end{aligned}
\end{equation*}
where the local quasicontinuum force (per lattice spacing $\eps$) is given by
\begin{equation} \label{lqcforce}
\begin{split}
F_{j}^{lqc}(\mathbf y):=-\frac{1}{\eps} \frac{\partial \E^a(\yb)}{\partial y_j}
&=\frac{1}{\eps}\Bigg\{\left[\phi'\left( \frac{y_{j+1} - y_{j}}{\eps} \right)
+ 2\phi'\left(\frac{2(y_{j+1} - y_{j})}{\eps}\right)\right]\\
&\qquad\qquad -\left[\phi'\left( \frac{y_{j} - y_{j-1}}{\eps} \right)
+ 2\phi'\left(\frac{2(y_{j} - y_{j-1})}{\eps}\right)\right]\Bigg\}.
\end{split}
\end{equation}
Linearizing the local quasicontinuum equilibrium equations
\eqref{lqcforce} about the deformed state $\yb^F$ results in
\begin{equation*}
\begin{aligned}
(L^{lqc} \ub^{lqc})_j&=f_j&\quad&\text{for} \quad j = -M+1, \dots, M-1,\\
u^{lqc}_{j}&=0&\quad&\text{for} \quad j = -M,\,M,
\end{aligned}
\end{equation*}
where $(L^{lqc}\vb)_j$, for a displacement $\vb \in
\mathbb{R}^{2M+1}$, is given by
\begin{equation*}
(L^{lqc} \vb)_j=
(\phi''_F + 4 \phi''_{2F})
	\left[\frac{-v_{j+1} + 2 v_{j} - v_{j-1}}{\eps^2} \right], \quad j=-M+1,\dots, M-1.
\end{equation*}

The increased efficiency of the local QC approximation is obtained
when its equilibrium equations~\eqref{lqcforce} are coarsened by
reducing the degrees of freedom using piecewise linear interpolation
between a subset of the
atoms~\cite{dobsonluskin07,tadmor_miller_qc_overview}.  For the sake
of simplicity of exposition, we do not treat coarsening in this paper.

In order to combine the accuracy of the atomistic model with the
efficiency of the local QC approximation, the force-based QC method
decomposes the reference lattice into an {\it atomistic region}
$\mathcal{A}$ and a {\it continuum region} $\mathcal{C}$, and assigns
forces to atoms according to the region they are located in. Since the
local QC energy~\eqref{lqcen} approximates $y_{j}-y_{j-2}$ in
\eqref{atoma} by $2(y_j-y_{j-1}),$ it is clear that the atomistic
model should be retained wherever the strains are varying rapidly. The
QCF operator is then given by~\cite{dobsonluskin07,dobsonluskin08}
\begin{equation}\label{qcfdefF}
F_{j}^{qcf}(\mathbf y)=\begin{cases}
F_{j}^{a}(\mathbf y)& \text{if $j\in \mathcal{A}$},\\
F_{j}^{lqc}(\mathbf y)& \text{if $j\in \mathcal{C}$},
\end{cases}
\end{equation}
and the QCF equilibrium equations by
\begin{equation*}
\begin{aligned}
F_j^{qcf}(\yb) + f_j&=0&\quad&\text{for} \quad j = -M+1, \dots, M-1,\\
y_{j}&=Fj\eps& \quad&\text{for} \quad j = -M,\,M.
\end{aligned}
\end{equation*}
The force-based quasicontinuum approximation gets its name from the
assignment of forces at the atoms in~\eqref{qcfdefF}.  Most other
quasicontinuum approximations build a total energy by summing energy
contributions from each region and compute forces on the atoms by
differentiating the energy.  However, $F^{qcf}$ is a non-conservative
force field and cannot be derived from an
energy~\cite{dobsonluskin07}.

\subsection{Artificial boundary conditions for the computational domain}
For large atomistic systems, it is necessary to reduce the
computational domain, even when using a coarse-graining method such as
the QC approximation.  The reduction of the computational domain
requires the use of artificial boundary conditions to approximate the
effect of the far field.  The artificial boundary condition most
commonly used in the QC method (and in computations using other
atomistic to continuum approximations) sets the displacement to zero
at the boundary of the computational domain, such as at the lateral
boundary of the crystal in the nanoindentation problem reported in
~\cite{Knap:2001a}.  More accurate artificial boundary conditions such
as given and analyzed in~\cite{lee:1749} do not seem to have yet been
used in quasicontinuum computations.

We chose to imitate the approach commonly used in the QC method, by
choosing $N \ll M$ and $0 < K < N-1$, letting $\{-N,\dots,N\}$ be the
computational domain, $\mathcal{A} = \{-K,\dots,K\}$ the atomistic
region, and $\mathcal{C} = \{-N+1, \dots, N-1\} \setminus \mathcal{A}$
the continuum region, and defining
\begin{equation}
\label{fullqcfc}
\begin{aligned}
F_j^{qcf}(\yb) + f_j&=0&\quad&\text{for} \quad j = -N+1, \dots, N-1,\\
y_{j}&=Fj\eps& \quad&\text{for} \quad j = -N,\,N,
\end{aligned}\end{equation}
to be the QCF approximation on the computational domain.  In this
paper, we analyze the linearization of ~\eqref{fullqcfc},
\begin{equation}
\label{qcf}
\begin{aligned}
(L^{qcf} \ub^{qcf})_j&=f_j&\quad&\text{for} \quad j = -N+1, \dots, N-1,\\
u^{qcf}_{j}&=u^a_j&\quad&\text{for} \quad j = -N,\,N,
\end{aligned}
\end{equation}
where we have taken $u^{qcf}_{-N}=u^a_{-N}$ and $u^{qcf}_N=u^a_{N}$ so
that we may ignore the error induced by the artificial boundary
condition and exclusively focus on the error of the QC
approximation. Note that, since atoms near the artificial boundary
belong to $\mathcal{C}$, only one boundary condition is required at
each end.

Setting $\eps = 1/N$ throughout, we scale the problem
~\cite{BlancLeBrisLions:2007} so that the size of the computational
domain is of order O$(1)$.

\subsection{Notation}
We use $D : \mathbb{R}^{2N+1} \to \mathbb{R}^{2N}$ to denote the
backward difference operator, defined by
\begin{equation*}
  (D\vb)_j = D v_j = \frac{v_j - v_{j-1}}{\eps} \quad \text{ for }
  j = -N+1, \dots, N.
\end{equation*}
We will frequently employ the weighted $\ell^p$-norms,
\begin{equation*}
\begin{split}
\lpnorm{\vb}{p} &:= \Bigg(\eps \sum^{N}_{j=-N}  |v_j|^p\Bigg)^{1/p},
   \qquad 1\le p<\infty,\\
\lpnorm{\vb}{\infty} &:= \max_{-N\leq j\leq N}  |v_j|,
\end{split}
\end{equation*}
and the weighted inner product
\begin{displaymath}
 \langle {\bf v}, {\bf w} \rangle = \sum_{j = -N}^N \eps v_j w_j.
\end{displaymath}
The definition of the difference operator $D$, of the norms
$\|\vb\|_{\ell^p_\eps}$ and of the inner product $\langle \vb, \wb
\rangle$ is extended, in an obvious way, for vectors $\vb, \wb \in
\mathbb{R}^K$, where $K \in \mathbb{N}$ is arbitrary. For example, if
$\vb \in \mathbb{R}^{2M+1}$ then $D\vb \in \mathbb{R}^{2M}$. Moreover,
in view of this convention, the higher order difference operators
$D^2$, etc., can be defined by successive application of $D$; for
example, $D^2 v_j = \eps^{-2}(v_j - 2 v_{j-1} + v_{j-2})$.

The subspace of $\mathbb{R}^{2N+1}$ with homogeneous boundary
conditions is denoted
\begin{equation*}
\Ds  = \big\{ \vb \in \mathbb{R}^{2N+1} : v_{-N} = v_N = 0 \big\}.
\end{equation*}
For future reference we note that the following Poincar\'{e}
inequality holds \cite[Lemma A.3]{ortnersuli}:
\begin{equation}
  \label{eq:poincare}
  \| \vb \|_{\ell^\infty_\eps} \leq {\textstyle \frac{1}{2}}
  \| D\vb \|_{\ell^1_\eps} \qquad \text{for all } \vb \in \Ds.
\end{equation}

Furthermore, we note that the linear operator $L^{qcf}$ which has been
defined above as a mapping from $\Real^{2N+1}$ to $\Real^{2N-1}$ will
be considered below to be a mapping from $\Real^{2N+1}$ to $\Ds$ by
the extension
\[
(L^{qcf}\vb)_{-N}=(L^{qcf}\vb)_{N}=0 \qquad \text{for }
\vb\in\Real^{2N+1}.
\]
With this in mind, $\langle L^{qcf} \vb, \wb \rangle$ is well-defined
for all $\vb, \wb \in \mathbb{R}^{2N+1}$.

\subsection{Pointwise consistency of the force-based QC approximation}
The remarkable simplicity of the formulation of the force-based QC
approximation is mirrored by its equally simple consistency
analysis. Let $\ub^a$ be the solution to \eqref{atom} (assuming
$\phi_F'' + 4 \phi_{2F}'' > 0$, this system is well-posed), then the
truncation error ${\bf t} \in \mathbb{R}^{2N+1}$ is defined by $t_{-N}
= t_N = 0$ and
\begin{displaymath}
  t_j = (L^{qcf} \ub^a - \fb)_j = (L^{qcf}\ub^a - L^{a} \ub^a)_j
  \qquad \text{for} \quad j = -N+1, \dots, N-1,
\end{displaymath}
where $L^{qcf}\ub^a$ is understood by restricting $\ub^a$ to the
computational domain.  Since $(L^{qcf}\ub^a)_j = (L^a\ub^a)_j$
trivially holds for $j \in \mathcal{A}$ we have $t_j = 0$ for $j \in
\mathcal{A}$. For $j \in \mathcal{C}$, on the other hand, we have
\begin{align*}
  t_j = (L^{qcf}\ub^a - L^a \ub^a)_j =\,& \phi_{2F}'' \Bigg[ 4 \frac{-u^a_{j+1} + 2 u^a_j - u^a_{j-1}}{\eps^2} - \frac{-u^a_{j+2} + 2 u^a_j - u^a_{j-2}}{\eps^2} \Bigg] \\
  =\,& \eps^2 \phi_{2F}'' \Bigg[ \frac{u^a_{j+2} - 4 u^a_{j+1} + 6 u^a_j - 4 u^a_{j-1} + u^a_{j-2}}{\eps^4} \Bigg] = \eps^2 \phi_{2F}'' (\bar D^{4} \ub^a)_j,
\end{align*}
where $(\bar D^4 \vb)_j=(D^4\vb)_{j+2}$ is a fourth-order centered
finite difference operator.  Note also that $\ub^a$ is defined outside
the computational domain.  Thus, for $p \in [1, \infty]$, we obtain an
exact truncation error estimate,
\begin{equation}
  \label{eq:qcf_const_1}
  \big\| {\bf t} \big\|_{\ell^p_\eps}
  = \eps^2 |\phi_{2F}''| \| \bar D^4 \ub^a \|_{\ell^p_\eps(\mathcal{C})},
\end{equation}
where the label $\mathcal{C}$ indicates that the summation (or
maximum) is only taken over atoms in the continuum region.

We have presented (\ref{eq:qcf_const_1}) as a simple argument for the
high accuracy of the QCF method, however, in the error analysis in
Section \ref{sec:converge} we will use a slightly sharper
negative-norm estimate. We also note that it follows from the interior
regularity of elliptic finite difference operators~\cite{thomee68}
that $\| \bar D^4 \ub^a \|_{\ell^p_\eps(\mathcal{C})}$ is bounded in
the continuum limit $\eps\to 0$ if $\fb$ is the restriction of a
smooth function in a neighborhood of the continuum region
$\mathcal{C}$ to the lattice points since the continuum region
$\mathcal{C}$ is far from the boundary of the atomistic problem if
$N\ll M.$

To estimate the error between the atomistic and QCF solution, we write
\begin{displaymath}
  L^{qcf}(\ub^a - \ub^{qcf}) = {\bf t}
  = O(\eps^2 |\phi_{2F}''|).
\end{displaymath}
Hence, a uniform stability result for the operator $L^{qcf}$ in an
appropriate norm would lead to an optimal error estimate.  As we have
already remarked in the introduction and will make precise in Theorem
\ref{th:nocoerc}, $L^{qcf}$ is typically {\em not} coercive and we
must therefore prove an inf-sup condition instead. To this end, we
will factor the $L^{qcf}$ operator into divergence form, $L^{qcf} =
D^T E^{qcf} D,$ where $D$ is the discrete difference operator defined
above.  We will give conditions under which $E^{qcf}$ is row
diagonally-dominant and which will lead to an inf-sup condition for
$L^{qcf}.$ Interestingly, however, this approach only leads to uniform
stability bounds if the $\ell^\infty_\eps$-$\ell^1_\eps$ duality
pairing is used, while the inf-sup constants for the
$\ell^p_\eps$-$\ell^q_\eps$ ($1/p+1/q = 1, \, 1 \leq p < \infty$)
pairings are not uniform in $N$ (cf. Section \ref{sec:norms}).

\section{Divergence Form of the QCF Operator}
\label{sec:conjugate}

We will analyze the QCF equilibrium equations~\eqref{qcf} by putting
them into a ``weak form:'' {\it find $\ub^{qcf}\in\Real^{2N+1}$ such
  that }
\begin{equation*}
\begin{split}
\langle E^{qcf} D\ub^{qcf}, D\wb \rangle =\,& \langle \fb, \wb \rangle \qquad \forall \, \wb \in \Ds, \\
u^{qcf}_j =\,& u^a_j \qquad  \text{for } j = -N,\,N,
\end{split}
\end{equation*}
where the linear operator $E^{qcf}:\Real^{2N}\to\Real^{2N} $ is chosen
so that $\langle E^{qcf} D\vb, D\wb \rangle=\langle L^{qcf} \vb, \wb
\rangle$ for all $\vb \in \Real^{2N+1}$ and $\wb \in \Ds$.  We call $E^{qcf}$ the
conjugate operator.  This operator was previously derived for a
Neumann problem in~\cite{dobsonluskin07} and for a problem with mixed
boundary conditions in~\cite{dobsonluskin08}.

To motivate the idea we briefly review the conjugate operator for the
full atomistic model before deriving $E^{qcf}$. The atomistic energy
(and similarly all QC energies) can be written as functions of the
strain $D\ub,$ $\widehat{\E}^{a}(D\ub):={\E}^{a}(\yb),$ and its
conjugate operator is defined by~\cite{dobsonluskin07,dobsonluskin08}
  \begin{equation*}
    \left(E^{a} (\rb)\right)_j := \frac 1\eps \pd{\widehat{\E}^{a}}{r_j}(\rb).
  \end{equation*}
  Thus, $\left(E^{a} (\rb)\right)_j$ is the negative of the force
  conjugate to the strain $(D \ub)_j.$ It follows from the chain rule
  that
  \begin{equation*}
    \begin{split}
      F_{j}^{a}(\mathbf y)
      &:=-\frac{1}{\eps}\frac{\partial \E^{a}(\yb)}{\partial y_j}
      =\frac{1}{\eps}\left[\pd{\widehat{\E}^{a}}{r_{j+1}}(\rb)
        -\pd{\widehat{\E}^{a}}{r_j}(\rb)\right] \\
      &=\left[  \left( E^{a} (\rb)\right)_{j+1}
        -\left(E^{a} (\rb)\right)_j\right].
    \end{split}
  \end{equation*}
  Applying this calculation to the linearized operator $L^a,$ one can
  easily verify that $\langle L^{a} \vb, \wb \rangle = \langle E^a D
  \vb, D\wb \rangle$, for all $\vb \in \mathbb{R}^{2M+1}$ and $\wb \in
  \mathbb{R}^{2M+1}$ with $w_{-M} = w_{M} = 0$, where
\begin{equation}
  \label{eq:defn_Ea}
  E^a = \phi_F'' I  + \phi_{2F}'' \left[
    \begin{matrix}
      1 &      1 &        &        &    \\
      1 &      2 & 1      &        &    \\
        & \ddots & \ddots & \ddots &    \\
        &        &      1 &      2 & 1  \\
        &        &        &      1 & 1
    \end{matrix}
  \right].
\end{equation}
From this representation we obtain immediately that, for $\vb \in
\mathbb{R}^{2M+1}$ and for $\wb \in \Ds$, extended by zero outside the
computational domain, the operator $L^a$ can be written in the
``weak'' form
\begin{equation}
  \label{eq:La_weak}
  \langle L^a \vb, \wb \rangle = \sum_{j = -N+1}^N \big[
  \phi_F''Dv_j + \phi_{2F}'' \big( Dv_{j+1} + 2 Dv_j
  + Dv_{j-1}\big) \big] Dw_j.
\end{equation}
This formula will be used in Section \ref{sec:converge} to derive a
negative-norm truncation error estimate.

To find a representation for the QCF operator $L^{qcf}$ in terms of a
conjugate operator we cannot simply carry out the same computation as
above, even in the linearized case, since it is not related to any
energy functional. Instead, we will first derive a ``weak'' form for
$L^{qcf}$ from which it will be fairly straightforward to construct
the conjugate operator. We begin by writing $L^{qcf}$ in the form
$L^{qcf} = \phi_{F}'' L_1 + \phi_{2F}'' L_2$, where
\begin{align*}
  (L_1 \vb)_j =~&
  \eps^{-2} \big( -v_{j+1} + 2 v_j - v_{j-1} \big), \qquad j = -N+1,
  \dots, N-1,  \qquad \text{and} \\
  (L_2 \vb)_j =~& \begin{cases}
    4\eps^{-2} \big( - v_{j+1} + 2 v_j - v_{j-1} \big), &
    j \in \mathcal{C}, \\
    \eps^{-2} \big( -v_{j+2} + 2 v_j - v_{j-2} \big), & j \in
    \mathcal{A},
  \end{cases}
\end{align*}
and deriving ``weak'' representations of the operators $L_1$ and
$L_2$.

\begin{lemma}
  \label{th:ibp_lemma}
  For all $\vb \in \Real^{2N+1}$ and $\wb \in \Ds$ the nearest neighbor and next-nearest neighbor
  interaction operators can be written in the form
  \begin{align*}
    \langle L_1 \vb, \wb \rangle =~& \sum_{j = -N+1}^N \eps D v_j D w_j,
    \qquad \text{and} \\
    \langle L_2 \vb, \wb \rangle =~& \langle L_2^{reg} \vb, \wb \rangle
    + \eps^2 (D^3 v_{-K+1}) w_{-K} - \eps^2(D^3 v_{K+2}) w_K,
  \end{align*}
  where $D^3$ is the third-order  backward finite difference operator, $D^3 v_j = \eps^{-2}(Dv_j - 2 Dv_{j-1} + Dv_{j-2}),$ and where
  $L_2^{reg}$ denotes the ``regular'' component of $L_2$,
  \begin{displaymath}
    \langle L_2^{reg} \vb, \wb \rangle = \sum_{j = -N+1}^{-K}
    \eps 4 Dv_j Dw_j
    + \sum_{j = -K+1}^K \eps (Dv_{j-1} + 2 Dv_j + Dv_{j+1}) Dw_j
    + \sum_{j = K+1}^{N} \eps 4 Dv_j Dw_j.
  \end{displaymath}
\end{lemma}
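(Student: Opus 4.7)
\medskip\noindent\textbf{Proof proposal.}
The identity for $L_1$ is a textbook summation by parts. Writing $(L_1\vb)_j = \eps^{-1}(Dv_j - Dv_{j+1})$, computing $\langle L_1\vb,\wb\rangle = \sum_{j=-N+1}^{N-1}(Dv_j-Dv_{j+1})w_j$, and shifting indices reduces the sum to $\sum_{j=-N+2}^{N-1}\eps Dv_j Dw_j$ together with two leftover endpoint terms $Dv_{-N+1}w_{-N+1}$ and $-Dv_N w_{N-1}$. Using $w_{-N}=w_N=0$, these endpoint terms equal $\eps Dv_{-N+1}Dw_{-N+1}$ and $\eps Dv_N Dw_N$ respectively, yielding the stated expression.

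For $L_2$, my plan is to split the sum defining $\langle L_2\vb,\wb\rangle$ over the three sub-ranges $j\in\{-N+1,\dots,-K-1\}$ (left continuum), $j\in\{-K,\dots,K\}=\mathcal{A}$, and $j\in\{K+1,\dots,N-1\}$ (right continuum), perform summation by parts on each piece, and combine. On the continuum sub-ranges the operator equals $4L_1$, so the same manipulation as for $L_1$ produces the $4\eps Dv_j Dw_j$ terms of $L_2^{reg}$ over $\{-N+1,\dots,-K\}$ and $\{K+1,\dots,N\}$ — the only difference is that $w_{-K-1}$ and $w_{K+1}$ play the role of the boundary values, and since these are nonzero they generate two unmatched interface terms $-4Dv_{-K}w_{-K}$ and $+4Dv_{K+1}w_K$ (after writing $w_{-K-1}=w_{-K}-\eps Dw_{-K}$ and $w_{K+1}=w_K+\eps Dw_{K+1}$ to fold the missing $j=-K$ and $j=K+1$ strain terms into the inner sums).

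The atomistic sub-sum is the crux. Using $(v_j-v_{j-2})/\eps = Dv_{j-1}+Dv_j$, one has $(L_2\vb)_j = \eps^{-1}(\alpha_j-\alpha_{j+2})$ with $\alpha_j:=Dv_{j-1}+Dv_j$, so the contribution is $\sum_{j=-K}^K(\alpha_j-\alpha_{j+2})w_j$. Shifting the second sum and separating the overlapping range $\{-K+2,\dots,K\}$ from the four unmatched endpoints gives
\[
\alpha_{-K}w_{-K}+\alpha_{-K+1}w_{-K+1}+\eps\sum_{j=-K+2}^{K}\alpha_j(Dw_{j-1}+Dw_j)-\alpha_{K+1}w_{K-1}-\alpha_{K+2}w_K,
\]
where I used $w_j-w_{j-2}=\eps(Dw_{j-1}+Dw_j)$. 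Expanding the product $\alpha_j(Dw_{j-1}+Dw_j)$ and reindexing each of the four resulting pieces so that the weight on $Dw_j$ is grouped gives $\eps\sum_{j=-K+2}^{K-1}(Dv_{j-1}+2Dv_j+Dv_{j+1})Dw_j$ plus four leftover cross-terms at the two endpoints. Converting $w_{-K+1}$ and $w_{K-1}$ to $w_{\pm K}\pm\eps Dw$ as above absorbs these cross-terms and the factors $\alpha_{-K+1},\alpha_{K+1}$ into the inner sum, extending its range to $j=-K+1,\dots,K$ — precisely the middle sum in $L_2^{reg}$.

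Adding the three contributions, all the $Dv_j Dw_j$-type terms assemble into $\langle L_2^{reg}\vb,\wb\rangle$ and what remains is a linear combination of $w_{-K}$ and $w_K$ only. A direct computation shows that the coefficient of $w_{-K}$ is
\[
-4Dv_{-K}+(\alpha_{-K}+\alpha_{-K+1}) = Dv_{-K-1}-2Dv_{-K}+Dv_{-K+1} = \eps^2 D^3 v_{-K+1},
\]
and, symmetrically, the coefficient of $w_K$ is $-\eps^2 D^3 v_{K+2}$, which is exactly the boundary contribution in the claim. The main bookkeeping challenge is this last step: tracking the several leftover pieces from each of the three sub-sums and verifying that the endpoint contributions collapse into a pure second difference of $Dv$ (rather than a more complicated combination). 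Once this is done, the identification with $D^3$ via its definition completes the proof.
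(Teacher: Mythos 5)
Your proposal is correct and follows essentially the same route as the paper's proof: summation by parts performed separately over the atomistic and continuum sub-ranges, followed by collecting the unmatched interface displacements $w_{\pm K}$ and verifying that their coefficients collapse to $\pm\eps^2 D^3 v$ at the interface indices. The only cosmetic differences are that you carry out both halves explicitly (the paper does the right half and invokes symmetry) and you verify the $L_1$ identity, which the paper omits; all your intermediate identities and the final coefficients $\eps^2 D^3 v_{-K+1}$ and $-\eps^2 D^3 v_{K+2}$ check out.
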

\begin{proof}
  We only prove the representation for $L_2$.  To simplify the
  notation, we will perform all manipulations only in the {\em right}
  half of the domain and indicate the remaining terms by dots, for
  example,
  \begin{displaymath}
    \langle L_2 {\bf v}, {\bf v} \rangle
    = \dots +  \sum_{j = 0}^K \eps (L_2 {\bf v})_j w_j
    + \sum_{j = K+1}^{N-1} \eps (L_2 {\bf v})_j w_j.
  \end{displaymath}

  The proof simply requires careful summation by parts, performed
  separately in the continuum and atomistic region. In the right half
  of the atomistic region, summation by parts yields
  \begin{align*}
    \sum_{j = 0}^{K} \eps (L_2 {\bf v})_j w_j
    =~& - \sum_{j = 0}^{K} \Big[ \Big(\frac{v_{j+2} - v_j}{\eps}\Big)
    - \Big(\frac{v_{j} - v_{j-2}}{\eps}\Big) \Big] w_j \\
    =~& - \sum_{j = {2}}^{K+2} \Big(\frac{v_{j} - v_{j-2}}{\eps}\Big) w_{j-2}
    + \sum_{j = 0}^{K} \Big(\frac{v_{j} - v_{j-2}}{\eps}\Big) w_{j} \\
     =~& \dots + \sum_{j = 2}^{K} \eps \big(D v_j + Dv_{j-1} \big)
    \big( Dw_j + Dw_{j-1} \big) \\
    & \hspace{5mm} - \big[ (Dv_{K+1} + Dv_K) w_{K-1}
    + (Dv_{K+2} + Dv_{K+1}) w_{K} \big] \\
    =~& \dots + \sum_{j = 1}^{K} \eps \big(D v_{j+1} + 2 D v_j
    + Dv_{j-1} \big) Dw_j  \\
    & \hspace{5mm} - (Dv_{K+2} + 2 Dv_{K+1} + Dv_K) w_{K}.
  \end{align*}
  Here, we also used the dots to indicate additional terms which would
  have canceled had we performed the calculation over the entire
  domain.  A similar computation in the continuum region gives
  \begin{displaymath}
    \sum_{j = K+1}^{N-1} \eps (L_2 {\bf v})_j v_j = \sum_{j = K+1}^N
    \eps 4 D v_j Dw_j + 4 Dv_{K+1} w_{K} .
  \end{displaymath}
  Considering the symmetry of the problem, or by performing the same
  calculation in the left half of the domain, we obtain the stated
  result.
\end{proof}

In order to find the conjugate operator, we only need to write $w_K$
and $w_{-K}$ in terms of the strains $Dw_j$. This is achieved by
connecting these displacements to the boundary, for example, we can
use the identities
\begin{displaymath}
  w_K = - \sum_{j = K+1}^N \eps Dw_j \quad \text{and} \quad
  w_{-K} = \sum_{j = -N+1}^{-K} \eps Dw_j.
\end{displaymath}
Note, however, that there is no unique way of achieving this. Our
choice above simply minimizes the number of non-zero entries for
$E^{qcf}$ in each row, a fact that will become important later
on. Thus, we obtain
\begin{equation}
  E^{qcf} = \phi''_F I + \phi''_{2F} \left[
  \begin{array}{cccccccccccccccc}
4&&&1&\hbox{-}2&1&&&&&&&\\[-2.2mm]
&\ddots&&\vdots&\vdots&\vdots&&&&&&&\\[-0.5mm]
&&4&1&\hbox{-}2&1&&&&&&&\\
&&&5&\hbox{-}2&1&&&&&&&\\
&&&1&2&1&&&&&&&\\
&&&&1&2&1&&&&&&\\[-2.2mm]
&&&&&\ddots&\ddots&\ddots&&&&&\\[-0.5mm]
&&&&&&1&2&1&&&&\\
&&&&&&&1&2&1&&&\\
&&&&&&&1&\hbox{-}2&5&&&\\
&&&&&&&1&\hbox{-}2&1&4&&\\[-2.2mm]
&&&&&&&\vdots&\vdots&\vdots&&\ddots&\\[-0.5mm]
&&&&&&&1&\hbox{-}2&1&&&4\\
\end{array}
    \right].
  \label{eqcfd}
\end{equation}
With this choice we indeed obtain the identity $\langle E^{qcf} D\vb,
D\wb \rangle=\langle L^{qcf} \vb, \wb \rangle$ for all $\vb \in \Real^{2N+1}$ and $\wb \in
\Ds$.

\section{Lack of Coercivity}
\label{sec:nocoerc}

Local minimizers of the atomistic energy are characterized,
essentially, by the fact that the Hessian is positive definite.  It
can be shown that the coercivity of the operator $L^a$ in appropriate
norms is independent of problem size $N$, provided that $\phi_F'' + 4
\phi_{2F}'' > 0$ (see \cite{dobsonluskinopt,sharpstability} for a
periodic problem; the proof for the Dirichlet boundary value problem
is very similar).  It can moreover be shown that the {\em
  energy-based} quasi-nonlocal approximation always inherits
coercivity of the atomistic operator~\cite{sharpstability}.

In Section~\ref{sec:stability}, we will give conditions on $\phi''_F$
and $\phi''_{2F}$ under which the force-based QC operator $L^{qcf}$
inherits stability (in a more general sense) uniformly in $N,$ even
though it does {\em not} inherit coercivity.  In fact, as we show in
the following theorem, the $L^{qcf}$ operator is not coercive whenever
$N$ is sufficiently large.  That is, not only is it lacking uniform
coercivity, but it is not even positive definite.  This result clearly
demonstrates why we need to work with a technically more involved
inf-sup stability condition in our error analysis in later sections.

\begin{theorem}
  \label{th:nocoerc}
  Suppose that $\phi_F'' > 0$ and $\phi_{2F}'' \in \mathbb{R}
  \setminus \{0\};$ then, for sufficiently large $N,$ the operator
  $L^{qcf}$ is {\em not} coercive. More precisely, there exist $N_0
  \in \mathbb{N},\, C_1 \geq C_2 > 0$ such that, for all $N \geq N_0$ and
  $2 \leq K \leq N/2$,
  \begin{displaymath}
    - C_1 N^{1/2} \leq  \inf_{\substack{\vb \in \Ds \\ \|D\vb\|_{\ell^2_\eps} = 1}}
    \langle L^{qcf} {\bf v}, {\bf v} \rangle
    \leq - C_2 N^{1/2}.
  \end{displaymath}
\end{theorem}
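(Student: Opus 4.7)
My plan is to use the divergence form $\langle L^{qcf}\vb,\vb\rangle = \langle E^{qcf}D\vb, D\vb\rangle$, split $L^{qcf} = \phi_F'' L_1 + \phi_{2F}''L_2$, and invoke Lemma \ref{th:ibp_lemma}. Setting $\rb := D\vb$, this yields
\begin{equation*}
\langle L^{qcf}\vb,\vb\rangle
= \phi_F''\|\rb\|_{\ell^2_\eps}^2 + \phi_{2F}''\langle L_2^{reg}\vb,\vb\rangle + \phi_{2F}''\bigl(\alpha_L v_{-K} - \alpha_R v_K\bigr),
\end{equation*}
where $\alpha_L := r_{-K+1} - 2r_{-K} + r_{-K-1}$ and $\alpha_R := r_{K+2} - 2r_{K+1} + r_K$. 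Since $L_2^{reg}$ has a uniformly bounded number of uniformly bounded entries per row, Cauchy--Schwarz shows that the first two terms are uniformly $O(\|\rb\|_{\ell^2_\eps}^2)$. All of the $N^{1/2}$ behaviour must therefore come from the \emph{nonlocal interface term} $\alpha_L v_{-K} - \alpha_R v_K$, and the proof reduces to bounding this quantity sharply from both sides.

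For the lower bound $\inf \geq -C_1 N^{1/2}$, the pointwise inequality $\eps r_j^2 \leq \|\rb\|_{\ell^2_\eps}^2$ gives $|\alpha_L|,|\alpha_R| \leq 4\|\rb\|_{\ell^\infty_\eps} \leq 4\eps^{-1/2}\|\rb\|_{\ell^2_\eps}$, while the Poincar\'e inequality \eqref{eq:poincare} combined with Cauchy--Schwarz on a domain of length $2$ gives $|v_{\pm K}| \leq \tfrac12\|D\vb\|_{\ell^1_\eps} \leq \tfrac{1}{\sqrt{2}}\|D\vb\|_{\ell^2_\eps}$. Multiplying these estimates yields $|\alpha_L v_{-K} - \alpha_R v_K| \leq CN^{1/2}\|D\vb\|_{\ell^2_\eps}^2$ and absorbs the other $O(1)$ contributions, establishing the lower bound.

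For the upper bound $\inf \leq -C_2 N^{1/2}$ I would construct an explicit test displacement whose strain equals a bounded constant $c_0 \in \{\pm 1\}$ on the left continuum, $-c_0$ on the right continuum, vanishes throughout the interior of the atomistic region, and carries an antisymmetric spike of size $\eps^{-1/2}$ at the two interface bonds:
\begin{equation*}
r_j = \begin{cases}
c_0, & -N+1 \leq j \leq -K-1,\\
-\eps^{-1/2}, & j = -K,\\
0, & -K+1 \leq j \leq K,\\
+\eps^{-1/2}, & j = K+1,\\
-c_0, & K+2 \leq j \leq N.
\end{cases}
\end{equation*}
The antisymmetry forces $\sum_j \eps r_j = 0$, so the associated $\vb$ lies in $\Ds$, and the spike contributes only $O(1)$ to $\|\rb\|_{\ell^2_\eps}^2$ since $\eps\cdot\eps^{-1} = 1$; one checks $\|\rb\|_{\ell^2_\eps}^2 \leq 4$. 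A direct computation produces $\alpha_L = 2\eps^{-1/2} + c_0 = -\alpha_R$ and $v_{-K} = v_K = c_0(1 - (K+1)/N) - \eps^{1/2}$, so that the nonlocal interface term simplifies to $2\alpha_L v_{-K}$. Choosing $c_0 := \sgn(-\phi_{2F}'')$ and invoking $K \leq N/2$ (which guarantees $|v_{-K}| \geq 1/8$ for large $N$) makes $\phi_{2F}''\cdot 2\alpha_L v_{-K}$ negative of order $-|\phi_{2F}''|N^{1/2}$; normalising by $\|D\vb\|_{\ell^2_\eps}^2$ then gives the upper bound.

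The main obstacle is the sign bookkeeping. A naively symmetric test function produces $\alpha_R v_K = \alpha_L v_{-K}$ so that the left and right interface contributions cancel exactly in $\alpha_L v_{-K} - \alpha_R v_K$. The antisymmetric construction above reverses this by producing $\alpha_R = -\alpha_L$ together with $v_K = v_{-K}$, so the two contributions reinforce. The factor $c_0 = \sgn(-\phi_{2F}'')$ is chosen last to ensure that the final product with $\phi_{2F}''$ is negative regardless of the sign of $\phi_{2F}''$, thereby handling both cases allowed by the hypothesis $\phi_{2F}'' \in \mathbb{R}\setminus\{0\}$.
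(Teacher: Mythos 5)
Your proposal is correct and takes essentially the same route as the paper: both reduce the problem to the non-variational interface terms $\alpha_L v_{-K} - \alpha_R v_K$ coming from Lemma \ref{th:ibp_lemma}, bound them by $O(N^{1/2})\,\|D\vb\|_{\ell^2_\eps}^2$ via the Poincar\'e and Cauchy--Schwarz inequalities for the lower bound, and for the upper bound build a test function with an $O(1)$ displacement at the interface plus an $\eps^{-1/2}$ strain spike there (the paper uses a one-sided plateau with a point displacement spike at node $K+1$; you use a two-sided, strain-level version, which is equivalent). The only blemish is the closing heuristic that a ``naively symmetric'' test function makes the two interface contributions cancel --- by the reflection symmetry of the problem they in fact reinforce for both symmetric and antisymmetric displacements, and your own construction is the symmetric case --- but this is purely motivational and your explicit computation of $\alpha_L$, $\alpha_R$, $v_{\pm K}$ and the sign choice $c_0=\sgn(-\phi_{2F}'')$ is correct.
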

\begin{proof}
  As in Section \ref{sec:conjugate} we write $L^{qcf} = \phi_{F}'' L_1
  + \phi_{2F}'' L_2$. Since $\langle L_1 {\bf v}, {\bf v}
  \rangle=\|D\vb\|_{\ell^2_\eps}^2,$ we need to concentrate on the
  next-nearest neighbor interaction operator $L_2$.
  If we can show
  that $L_2$ is neither bounded above nor below, uniformly in $N$, and
  with the stated asymptotic behavior, then the upper bound follows.
  The lower bound follows from the fact that $L_1$ is bounded while
  $|\langle L_2 \vb, \vb \rangle| \leq C_1 N^{1/2}
  \|D\vb\|_{\ell^2_\eps}^2$. Both of these facts are established in
  the following lemma.
\end{proof}

\begin{lemma}
  \label{th:cocoerc_lemma}
  Under the conditions of Theorem \ref{th:nocoerc}, there exist
  positive constants $c_1,\, c_2$, independent of $N$, and lattice
  functions $\vb^+,\, \vb^- \in \Ds$ such that
  $\lpnorm{D\vb^+}{2}=\lpnorm{D\vb^-}{2}=1,$
  \begin{displaymath}
    \langle L_2 {\bf v}^+, {\bf v}^+ \rangle \geq c_1 (N^{1/2} - c_2),
    \quad \text{and} \quad
    \langle L_2 {\bf v}^-, {\bf v}^- \rangle \leq -c_1 (N^{1/2} - c_2).
  \end{displaymath}
  Moreover, these bounds are asymptotically optimal in that there
  exists a constant $c_3 > 0$ such that
  \begin{displaymath}
    \big| \langle L_2 {\bf v}, {\bf v} \rangle \big| \leq c_3 N^{1/2}
    \qquad \text{for all } \vb \in \Ds \text{ with }
    \|D\vb\|_{\ell^2_\eps} = 1.
  \end{displaymath}
\end{lemma}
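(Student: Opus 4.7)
The plan is to use the divergence-form identity of Lemma~\ref{th:ibp_lemma},
\begin{equation*}
\langle L_2 \vb, \vb \rangle = \langle L_2^{reg} \vb, \vb \rangle + \eps^2 (D^3 v_{-K+1}) v_{-K} - \eps^2 (D^3 v_{K+2}) v_K,
\end{equation*}
to split $\langle L_2\vb,\vb\rangle$ into a bounded ``regular'' contribution and two interfacial boundary terms, which will carry all of the $N^{1/2}$ behavior claimed by the lemma.

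For the uniform upper bound, a Cauchy--Schwarz argument applied to each of the three sums defining $\langle L_2^{reg}\vb,\vb\rangle$ (using $|\eps\sum Dv_{j\pm1}Dv_j| \leq \|D\vb\|_{\ell^2_\eps}^2$) gives $|\langle L_2^{reg}\vb,\vb\rangle| \leq C\|D\vb\|_{\ell^2_\eps}^2$. For each boundary term, I would combine the estimate $|v_{\pm K}| \leq \sqrt{(N-K)\eps}\,\|D\vb\|_{\ell^2_\eps} \leq \|D\vb\|_{\ell^2_\eps}$ (Cauchy--Schwarz on $v_{-K} = \sum_{k\leq -K}\eps Dv_k$) with $|\eps^2 D^3 v_{-K+1}| = |Dv_{-K+1} - 2Dv_{-K} + Dv_{-K-1}| \leq 4\|D\vb\|_{\ell^\infty_\eps} \leq 4\eps^{-1/2}\|D\vb\|_{\ell^2_\eps} = 4N^{1/2}$; this yields $|\langle L_2\vb,\vb\rangle| \leq c_3 N^{1/2}$ whenever $\|D\vb\|_{\ell^2_\eps} = 1$.

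For the lower bounds I would construct the explicit piecewise-affine test functions $\vb^\sigma \in \Ds$ ($\sigma = \pm 1$) whose discrete gradients are constant in each continuum region and carry a concentrated oscillation of amplitude $\sigma\alpha$ at each atomistic--continuum interface:
\begin{equation*}
(D\vb^\sigma)_j = \begin{cases} c, & j \in [-N+1,-K], \\ \sigma\alpha, & j = -K+1, \\ 0, & j \in [-K+2,K-1], \\ -\sigma\alpha, & j = K, \\ -c, & j \in [K+1,N]. \end{cases}
\end{equation*}
The weighted sum of these gradients vanishes so $v^\sigma_{\pm N} = 0$ automatically, and writing $\mu := (N-K)/N \geq 1/2$ one obtains $v^\sigma_{\pm K} = \mu c$, $\eps^2 D^3 v^\sigma_{-K+1} = \sigma\alpha - c$, and $\eps^2 D^3 v^\sigma_{K+2} = c - \sigma\alpha$, so the two boundary contributions add constructively to $2(\sigma\alpha - c)\mu c$. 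A short finite computation shows that $\langle L_2^{reg}\vb^\sigma,\vb^\sigma\rangle$ contributes only $O(1)$ (its four nonzero interfacial cross terms are each of order $\sigma\alpha c/N = O(N^{-1/2})$), hence
\begin{equation*}
\langle L_2\vb^\sigma,\vb^\sigma\rangle = 2\sigma\mu\alpha c + O(1),
\end{equation*}
and choosing $c = 1/(2\sqrt{\mu})$, $\alpha = N^{1/2}/2$ (which normalizes $\|D\vb^\sigma\|_{\ell^2_\eps}^2 = 2\mu c^2 + 2\alpha^2\eps = 1$) gives $2\mu\alpha c = \sqrt{\mu N}/2 \geq N^{1/2}/\sqrt{8}$, of the required form $\pm c_1(N^{1/2} - c_2)$.

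The main obstacle is identifying where to concentrate the oscillation of $D\vb$: spreading it over more than a constant number of bonds would shrink the resulting second difference, while moving it away from the interface would decouple it from $v_{\pm K}$. Concentrating it on exactly the three bonds $\{-K-1,-K,-K+1\}$ (and the symmetric triple on the right) lets $\alpha$ be chosen as large as $N^{1/2}$ without overshooting the $\ell^2_\eps$-budget, while a macroscopic constant slope $c$ in the continuum preserves $v_{\pm K} = \mu c = O(1)$; the product of these two effects delivers the $N^{1/2}$ scaling. The final step is to verify that no $O(N^{1/2})$ cancellation occurs in the middle sum of $\langle L_2^{reg}\vb^\sigma,\vb^\sigma\rangle$, which is a short direct calculation.
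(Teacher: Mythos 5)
Your proposal is correct and follows essentially the same route as the paper: both use the weak form of $L_2$ from Lemma~\ref{th:ibp_lemma} to isolate the interfacial terms $v_{\pm K}\,\eps^2 D^3 v$, bound $L_2^{reg}$ by $C\|D\vb\|_{\ell^2_\eps}^2$, and achieve the $N^{1/2}$ growth by pairing an $O(1)$ displacement $v_{\pm K}$ with a strain spike of amplitude $O(N^{1/2})$ concentrated on $O(1)$ bonds at the interface. The only difference is cosmetic: the paper perturbs the displacement by $\eps^{1/2}\bm\delta_{K+1}$ on top of a plateau-and-ramp profile (activating one interface), while you prescribe the strain directly and use both interfaces symmetrically; the sharpness argument is the same in both.
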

\begin{proof}
  We write $\langle L_2 {\bf v}, {\bf v} \rangle$ by setting $\wb =
  \vb$ in Lemma \ref{th:ibp_lemma}. The crucial observation is that
  the term $v_K (Dv_{K+2} - 2 Dv_{K+1} + Dv_{K})$ cannot be
  expressed as a quadratic
  form of strains supported at the interface, while all other
  terms are bounded in terms of (a constant multiple of) $\| D\vb
  \|_{\ell^2_\eps}^2$. More precisely, we recall that
  \begin{displaymath}
    \langle L_2 \vb, \vb \rangle = \langle L_2^{reg} \vb, \vb \rangle
    - v_K (Dv_{K+2} - 2 Dv_{K+1} + Dv_{K})
    + v_{-K}(Dv_{-K-1} - 2Dv_{-K} + Dv_{-K+1}),
  \end{displaymath}
  where $|\langle L_2^{reg} \vb, \vb \rangle| \leq c_2 \| D\vb
  \|_{\ell^2_\eps}^2$.  Next, we construct the
  functions $\vb^\pm$ by choosing $v_K = 1$ and so that the third
  difference in the bracket is of order $N^{1/2}$.

  To this end, we set ${\bf v} = \bar {\bf v} + \eps^{1/2} {\bm
    \delta}_{K+1} = \bar {\bf v} + N^{-1/2} {\bm
    \delta}_{K+1}$, where
  \begin{displaymath}
    \bar v_j = \left\{
      \begin{array}{rl}
        (N+j)/(N-K-2), & \quad j = -N, \dots, -K-2 \\
        1, & \quad j = -K-2, \dots, K+2, \\
        (N-j) / (N-K-2), & \quad j = K+2, \dots, N,
      \end{array}
    \right.
  \end{displaymath}
  (that is, $\bar v_j = 1$ in the atomistic region and the interface,
  and interpolates linearly between 1 and 0 in the continuum region)
  and where $\delta_{K+1, j} = 0$ if $j \neq K+1$ and $\delta_{K+1,\, K+1} =
  1$. In that case, $\|D {\bf v} \|_{\ell^2_\eps}$ is clearly
  uniformly bounded, and we obtain
  \begin{align*}
    \langle L_2 {\bf v}, {\bf v} \rangle =~&
    \langle L_2^{reg} \vb, \vb \rangle + 3 N^{1/2}.
  \end{align*}
  Note that no terms at the left interface occur since $\vb$ is a
  constant there.  Upon appropriately rescaling by ${\bf v}^+ = {\bf
    v}/\|D{\bf v}\|_{\ell^2_\eps}$ so that $\|D\vb^+\|_{\ell^2_\eps} =
  1$, we obtain
  \begin{displaymath}
     \langle L_2 {\bf v}^+, {\bf v}^+ \rangle \geq -c_2 + c_1 N^{1/2}.
  \end{displaymath}
  Setting ${\bf v}^- = c(\bar {\bf v} - \eps^{1/2} {\bm
    \delta}_{K+1})$ gives the opposite bound.

  To prove the final statement, namely that these bounds are
  asymptotically sharp, we note that all terms of the type $v_K Dv_j$
  are of order $N^{1/2}$,
  \begin{displaymath}
    \big| v_{K} Dv_j \big|
    = \eps^{-1/2} |v_{K}| \eps^{1/2} |Dv_{j}|
    \leq \eps^{-1/2} \|{\bf v}\|_{\ell^\infty_\eps} \|D {\bf v}\|_{\ell^2_\eps}
    \leq (2/\eps)^{1/2}  \|D {\bf v} \|_{\ell^2_\eps}^2,
  \end{displaymath}
  where we used (\ref{eq:poincare}) and a weighted Cauchy--Schwartz
  inequality to bound $\|{\bf v}\|_{\ell^\infty_\eps} \leq \sqrt{2} \|
  D{\bf v} \|_{\ell^2_\eps}$.
\end{proof}

\begin{remark}
  The proof of Lemma \ref{th:cocoerc_lemma} reveals that $N$ needs to
  be of the order $(1 + |\phi_{F}''/\phi_{2F}''|)^{2}$ before a loss
  of coercivity can occur. Although it may seem that this is typically
  a fairly large number, $(1 + |\phi_{F}''/\phi_{2F}''|)^{2}$ is not
  so large for strains $F$ near the edge of a stability region (such
  as near the critical strain at which the atomistic system
  ``fractures''~\cite{blan05}), or more generally whenever the
  next-nearest neighbor interaction is not significantly dominated by
  the nearest neighbor interaction.
\end{remark}
\section{Stability of the Force-Based Quasicontinuum Solution}
\label{sec:stability}

We first recall a classical characterization of the norm of the
inverse of an operator that we will use to prove the stability of the
solution to the force-based quasicontinuum approximation. The proof is
included for the sake of completeness.
\begin{lemma}[Inf-Sup Condition]
  \label{lem:infsup}
  Let $W$ and $V$ be finite dimensional normed linear spaces
  satisfying $\dim W = \dim V,$ and let $L$ be a bounded linear
  operator from $V$ to $W'$ where $W'$ is the dual of $W.$ Suppose
  that
  \begin{equation}
    \inf_{\substack{\vb \in V \\ \|\vb\|_V = 1}} \
    \sup_{\substack{\wb \in W \\ \|\wb\|_W = 1}}
    \langle L \vb, \wb \rangle = \gamma > 0.
    \label{eq:infsup}
  \end{equation}
  Then $L$ is invertible and the solution $\ub \in V$ to
  $L\ub = \fb$ satisfies the stability bound
  \begin{equation*}
    \|\ub\|_V \leq \frac{1}{\gamma} \|\fb\|_{W'} \quad \text{ where }\quad
    \|\fb\|_{W'} := \sup_{\substack{\wb \in W \\ \|\wb\|_W = 1}}
    \langle \fb, \wb \rangle.
  \end{equation*}
\end{lemma}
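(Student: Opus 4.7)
The plan is to observe that the inf-sup condition \eqref{eq:infsup} can be restated as the pointwise lower bound
\begin{equation*}
\gamma \|\vb\|_V \leq \sup_{\|\wb\|_W = 1} \langle L\vb, \wb \rangle
\qquad \text{for all } \vb \in V,
\end{equation*}
by scaling: dividing a nonzero $\vb$ by $\|\vb\|_V$ reduces to the normalized case appearing in the infimum, and the case $\vb = 0$ is trivial. This reformulation is the single ingredient that powers both conclusions of the lemma.

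Next I would establish invertibility of $L$ by proving injectivity and then invoking the dimension hypothesis. If $\vb \in V$ satisfies $L\vb = 0$, the reformulated inequality gives $\gamma \|\vb\|_V \leq 0$, and since $\gamma > 0$ we conclude $\vb = 0$. Thus $L: V \to W'$ is injective. Because $V$ and $W'$ are finite dimensional with $\dim V = \dim W = \dim W'$, injectivity forces surjectivity, and $L$ is invertible.

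Finally, for the stability bound I would apply the reformulated inequality with $\vb = \ub$, where $\ub$ is the unique solution of $L\ub = \fb$ guaranteed by the previous step. This yields
\begin{equation*}
\gamma \|\ub\|_V \leq \sup_{\|\wb\|_W = 1} \langle L\ub, \wb \rangle
= \sup_{\|\wb\|_W = 1} \langle \fb, \wb \rangle = \|\fb\|_{W'},
\end{equation*}
which is the claimed inequality after dividing by $\gamma$. There is no serious obstacle here; the only subtlety worth spelling out is the passage from the normalized infimum in \eqref{eq:infsup} to the homogeneous inequality, and the use of finite-dimensionality to conclude invertibility from injectivity.
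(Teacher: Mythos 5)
Your proposal is correct and follows essentially the same route as the paper: trivial nullspace from the inf-sup condition, invertibility by dimension count, and the homogeneous rescaling of \eqref{eq:infsup} applied to $\ub$ to obtain $\|\fb\|_{W'} \geq \gamma \|\ub\|_V$. The only cosmetic difference is that you state the rescaled inequality up front as a separate step, whereas the paper performs the division by $\|\ub\|_V$ inline in the final display.
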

\begin{proof}
  The inf-sup condition~\eqref{eq:infsup} implies that the nullspace
  of $L$ must be trivial.  Since a finite-dimensional linear operator
  between two spaces of the same dimension is invertible if and only
  if it is non-singular, we conclude that there is a unique solution
  $\ub \in V$ to $L\ub = \fb$ for every $\fb \in W'.$

  If $\|\ub\|_V = 0,$ then the stability bound is trivial.  Otherwise,
  we have
  \begin{equation*}
    \begin{split}
      \| \fb \|_{W'} = \sup_{\substack{\wb \in W \\ \|\wb\|_W = 1}}
      \langle L \ub, \wb \rangle
      = \|\ub\|_V \sup_{\substack{\wb \in W \\ \|\wb\|_W = 1}}
      \left\langle L \left(\frac{\ub}{\|\ub\|_V}\right), \wb \right\rangle
      \geq \gamma \|\ub\|_V. \qedhere
    \end{split}
  \end{equation*}
\end{proof}

Next, we note that the range of the backward difference operator $D$
is
\begin{equation*}
\R(D) = \mvz :=\Bigg\{ \xib \in \Real^{2N} : \sum_{j=-N+1}^N \xi_j = 0 \Bigg\},
\end{equation*}
 and therefore
\begin{equation*}
\begin{split}
\inf_{\substack{\vb \in \Ds \\ \lpnorm{D \vb}{\infty} = 1}} \
\sup_{\substack{\wb \in \Ds \\ \lpnorm{D \wb}{1} = 1}}
    \langle  L^{qcf} \vb,\,\wb \rangle
&=
\inf_{\substack{\vb \in \Ds \\ \lpnorm{D \vb}{\infty} = 1}} \
\sup_{\substack{\wb \in \Ds \\ \lpnorm{D \wb}{1} = 1}}
    \langle   E^{qcf} D \vb,\,D \wb \rangle  \\
&=
\inf_{\substack{\xib \in \mvz \\ \lpnorm{\xib}{\infty} = 1}}
\sup_{\substack{\etab \in \mvz \\ \lpnorm{\etab}{1} = 1}}
    \langle  E^{qcf} \xib ,\,\etab\rangle.
\end{split}
\end{equation*}

The following lemma gives a bound on such an inf-sup constant, for a
general matrix $A$. This result and its proof were inspired by
\cite[Sec. 3.1]{ortnersuli}.

\begin{lemma}
  \label{rdd_plus}
  Suppose that $A \in \Real^{2N \times 2N}$ satisfies
  \begin{equation*}
    \min_{i} \Big( A_{ii} + \sum_{j \neq i} A_{ij}^- \Big)
    - \max_{i} \sum_{j \neq i} A_{ij}^+  =: \gamma > 0,
  \end{equation*}
  where $A_{ij}^- = \min(0, A_{ij})$ and $A_{ij}^+ = \max(0, A_{ij})$,
  then
\begin{equation*}
\inf_{\substack{\xib \in \mvz \\ \lpnorm{\xib}{\infty} = 1}}
\sup_{\substack{\etab \in \mvz \\ \lpnorm{\etab}{1} = 1}}
    \langle A \xib,\,\etab \rangle \geq \gamma/2.
\end{equation*}
\end{lemma}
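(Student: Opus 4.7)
The plan is to exhibit, for each $\xib \in \mvz$ with $\lpnorm{\xib}{\infty} = 1$, an explicit two-point antisymmetric $\etab \in \mvz$ realizing $\langle A\xib, \etab\rangle \ge \gamma/2$. First I observe that the zero-sum constraint on $\mvz$ combined with $\lpnorm{\xib}{\infty} = 1$ forces either $\max_j \xi_j = 1$ or $\min_j \xi_j = -1$; since replacing $\xib$ by $-\xib$ preserves the inf-sup quantity, I may assume the former, fixing indices $i^+,\, i^-$ with $\xi_{i^+} = 1$ and $\xi_{i^-} =: m \in [-1,0]$. I then take $\etab \in \mvz$ with nonzero entries only at $\eta_{i^+} = (2\eps)^{-1}$ and $\eta_{i^-} = -(2\eps)^{-1}$, which automatically satisfies $\lpnorm{\etab}{1} = 1$ and reduces the task to the pointwise lower bound $(A\xib)_{i^+} - (A\xib)_{i^-} \ge \gamma$.

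To obtain this, I split $A_{ij} = A_{ij}^+ + A_{ij}^-$ and use $\xi_k \in [m,1]$ for every $k$. Minimizing each product $A_{i^+ k} \xi_k$ at the endpoint of $[m,1]$ selected by the sign of $A_{i^+ k}$ yields
\begin{equation*}
(A\xib)_{i^+} \ge P_{i^+} + m\,Q_{i^+},
\end{equation*}
and maximizing each $A_{i^- k}\xi_k$ similarly gives $(A\xib)_{i^-} \le m\,P_{i^-} + Q_{i^-}$, where
\begin{equation*}
P_i := A_{ii} + \sum_{k\neq i} A_{ik}^-, \qquad
Q_i := \sum_{k\neq i} A_{ik}^+,
\end{equation*}
so that the hypothesis reads $\min_i P_i - \max_i Q_i \ge \gamma$. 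Subtracting the two bounds rearranges to
\begin{equation*}
(A\xib)_{i^+} - (A\xib)_{i^-} \ge (P_{i^+} - Q_{i^-}) + m\,(Q_{i^+} - P_{i^-}).
\end{equation*}

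The crucial and, I expect, only nontrivial step is handling the cross term. The first bracket is $\ge \gamma$ directly from the hypothesis. For the second, the factor $Q_{i^+} - P_{i^-}$ is $\le \max_i Q_i - \min_i P_i \le -\gamma < 0$, while the $\mvz$-constraint on $\xib$ forces $m \le 0$, so their product is nonnegative and the total is at least $\gamma$. The main obstacle is therefore not computational but structural: the $\mvz$-constraint on the test function forbids the one-point choice $\etab \propto \eps^{-1}\delta_{i^+}$ that would immediately give the bound for a general matrix, and the two-point antisymmetric substitute succeeds only because the sign of $m$ (extracted from $\xib \in \mvz$) aligns with the sign of $Q_{i^+} - P_{i^-}$ (extracted from the dominance hypothesis), making the cross term harmless rather than destroying the $\gamma$-bound.
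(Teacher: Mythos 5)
Your proof is correct and follows essentially the same route as the paper: the same two-point antisymmetric test function $\etab$ supported at the argmax and argmin of $\xib$, with the zero-sum constraint supplying the sign information that makes the cross term harmless. The only (cosmetic) differences are that you normalize $\max_j \xi_j = 1$ via the symmetry $\xib \mapsto -\xib$ and bound each product $A_{ik}\xi_k$ at the endpoints of $[m,1]$, whereas the paper keeps both $|\xi_p|$ and $|\xi_q|$ and partitions the indices by the sign of $\xi_j$ before passing to the extremes.
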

\begin{proof}
  Let $\xib \in \mathbb{R}^{2N}_* \setminus \{0\}$ and choose $p, q
  \in \{-N+1, \dots, N\}$ such that $\xi_p = \max_j \xi_j$ and $\xi_q
  = \min_j \xi_j$. Since $\sum_{j = -N+1}^N \xi_j = 0$, it follows
  that $\xi_p > 0$ and $\xi_q < 0$. Moreover, let $P = \{ j : \xi_j
  \geq 0\}$ and $Q = \{ j : \xi_j < 0\}$.  If we define $\etab \in
  \mathbb{R}^{2N}_*$ by
\begin{equation*}
\eta_i = \begin{cases}
\frac{1}{2 \eps}, & \quad i = p, \\
-\frac{1}{2 \eps}, & \quad i = q, \\
0, & \quad \text{otherwise,}
\end{cases}
\end{equation*}
then
\begin{align*}
  2\langle A \xib, \etab \rangle =~& \Big\{\sum_{j} A_{pj} \xi_j\Big\}
  - \Big\{\sum_{j} A_{qj} \xi_j\Big\} \\
  \geq~& \Big\{A_{pp} \xi_p + \sum_{j \in Q} A_{pj}^+ \xi_j +\sum_{j \in P \setminus \{p\}} A_{pj}^- \xi_j\Big\}
  - \Big\{A_{qq} \xi_q + \sum_{j \in P} A_{qj}^+ \xi_j + \sum_{j \in Q \setminus \{q\}} A_{qj}^- \xi_j \Big\} \\
  \geq~& \Big\{A_{pp} \xi_p + \sum_{j \in Q} A_{pj}^+ \xi_q +\sum_{j \in P \setminus \{p\}} A_{pj}^- \xi_p\Big\}
  - \Big\{A_{qq} \xi_q + \sum_{j \in P} A_{qj}^+ \xi_p + \sum_{j \in Q \setminus \{q\}} A_{qj}^- \xi_q \Big\} \\
  =~& \Big[A_{pp} - \sum_{j \in P \setminus \{p\}} |A_{pj}^-| - \sum_{j \in P} |A_{qj}^+|\Big] |\xi_p|
  + \Big[A_{qq} - \sum_{j \in Q \setminus \{q\}} |A_{qj}^-| - \sum_{j \in Q} |A_{pj}^+| \Big] |\xi_q| \\
  \geq~& \gamma (|\xi_p| + |\xi_q|). \qedhere
\end{align*}
\end{proof}

\medskip \noindent From Lemma~\ref{rdd_plus} and from (\ref{eqcfd}),
we can now deduce that
\begin{align}
  \notag
  \inf_{\substack{\vb\in\Ds \\ \lpnorm{D \vb}{\infty} = 1}} \
  \sup_{\substack{\vb\in\Ds \\ \lpnorm{D \wb}{1} = 1}}
  \langle  L^{qcf} \vb,\,\wb \rangle &\geq
  \frac{1}{2} \Bigg[
  \min_i \Big( (E^{qcf})_{ii} + \sum_{j \neq i} (E^{qcf})_{ij}^- \Big)
  - \max_i \sum_{j \neq i} (E^{qcf})_{ij}^+ \Bigg] \\
  \label{eq:infsup_Lqcf}
  &=	\frac{1}{2} \big( \phi''_F + 8 \phi''_{2F} \big).
\end{align}
Combining this estimate with Lemma~\ref{lem:infsup} gives the
following stability result.
\begin{theorem}
  Suppose that $\phi''_F + 8 \phi''_{2F} > 0$. Then the QCF system
  (\ref{qcf}) has a unique solution $\ub^{qcf}$, which satisfies
\begin{equation}\label{dirsta}
  \big\| D \ub^{qcf} \big\|_{\ell^\infty_\eps} \leq
  \frac{2 \|\fb\|_{*}}{\phi''_F+8\phi''_{2F}}
  + \Big|\frac{u_N^a - u_{-N}^a}{2N} \Big|,
\end{equation}
where
\begin{equation*}
\|{\bf f}\|_{*} := \sup_{\substack{\wb \in \Ds \\ \lpnorm{D \wb}{1} = 1}}
      \langle \fb, \wb \rangle.
\end{equation*}
\end{theorem}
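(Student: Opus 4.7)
The plan is to reduce the inhomogeneous Dirichlet problem (\ref{qcf}) to one in $\Ds$ and then invoke the inf-sup estimate (\ref{eq:infsup_Lqcf}) through Lemma \ref{lem:infsup}. First I would define the affine interpolant of the boundary data,
\[
u^{lin}_j := \tfrac{1}{2}(u^a_N + u^a_{-N}) + \tfrac{j}{2N}(u^a_N - u^a_{-N}),
\]
so that $u^{lin}_{\pm N} = u^a_{\pm N}$ and the strain $D u^{lin}_j$ is constant in $j$. Writing $\ub^{qcf} = \ub^{lin} + \tilde\ub$, the new unknown satisfies $\tilde\ub \in \Ds$ and $L^{qcf}\tilde\ub = \fb - L^{qcf}\ub^{lin}$.

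The crucial step is the patch-test identity $L^{qcf}\ub^{lin} = 0$. Using the decomposition $L^{qcf} = \phi''_F L_1 + \phi''_{2F} L_2$ together with the weak representations of Lemma \ref{th:ibp_lemma}, $\langle L_1 \ub^{lin}, \wb\rangle$ and $\langle L_2^{reg} \ub^{lin}, \wb\rangle$ both reduce, for any $\wb \in \Ds$, to a constant multiple of $\sum_{j=-N+1}^N \eps Dw_j = w_N - w_{-N} = 0$; meanwhile the two interface correction terms in $\langle L_2 \ub^{lin}, \wb\rangle$ involve $D^3 u^{lin}$ and hence vanish since $u^{lin}$ is affine. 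Equivalently, one verifies directly from (\ref{eqcfd}) that every row of $E^{qcf}$ sums to $\phi''_F + 4\phi''_{2F}$, so $E^{qcf}D\ub^{lin}$ is itself constant, and $\langle L^{qcf}\ub^{lin},\wb\rangle = \langle E^{qcf}D\ub^{lin},D\wb\rangle = 0$ for all $\wb \in \Ds$; testing against $\wb = L^{qcf}\ub^{lin} \in \Ds$ gives $L^{qcf}\ub^{lin} = 0$.

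Consequently $\tilde\ub \in \Ds$ solves the homogeneous-boundary problem $L^{qcf}\tilde\ub = \fb$. Equipping $V = W = \Ds$ with the norms $\vb \mapsto \lpnorm{D\vb}{\infty}$ and $\wb \mapsto \lpnorm{D\wb}{1}$ (which are genuine norms on $\Ds$ by the Poincar\'e inequality (\ref{eq:poincare})), the inf-sup constant $\gamma = \tfrac{1}{2}(\phi''_F + 8\phi''_{2F}) > 0$ supplied by (\ref{eq:infsup_Lqcf}) allows me to invoke Lemma \ref{lem:infsup} to obtain existence, uniqueness, and the bound
\[
\lpnorm{D\tilde\ub}{\infty} \leq \frac{2\|\fb\|_*}{\phi''_F + 8\phi''_{2F}}.
\]
Finally, the triangle inequality $\lpnorm{D\ub^{qcf}}{\infty} \leq \lpnorm{D\tilde\ub}{\infty} + \lpnorm{D\ub^{lin}}{\infty}$ combined with the explicit value $\lpnorm{D\ub^{lin}}{\infty} = |u^a_N - u^a_{-N}|/(2N)$ yields (\ref{dirsta}).

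The only substantive step is the patch-test identity; the rest is bookkeeping on top of the already-proved inf-sup bound (\ref{eq:infsup_Lqcf}). This identity reflects the absence of ghost forces in the QCF method under uniform strain, so that the linearization must annihilate affine displacements, and it falls out cleanly from the weak representations in Lemma \ref{th:ibp_lemma} (or equivalently from the uniform row sums of $E^{qcf}$ in (\ref{eqcfd})).
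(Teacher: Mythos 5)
Your proof is correct and follows exactly the paper's argument: decompose $\ub^{qcf}$ into the affine interpolant of the boundary data plus a correction in $\Ds$, observe that $L^{qcf}$ annihilates affine displacements, apply Lemma \ref{lem:infsup} with the inf-sup constant \eqref{eq:infsup_Lqcf}, and finish with the triangle inequality. The only difference is that you spell out the patch-test identity $L^{qcf}\ub^{lin}=0$ (via the uniform row sums of $E^{qcf}$), which the paper dismisses as ``easily seen.''
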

\begin{proof}
  We write $\ub^{qcf} = \ub + \ub^D$ where $\ub \in \Ds$ and where
  $u^D_j = u_{-N}^a + (u_N^a - u_{-N}^a) (N+j)/(2N)$. Since $\ub^D$ is
  affine, it can be easily seen that $L^{qcf} \ub^D = 0$. Hence, the
  system is equivalent to $L^{qcf} \ub = \fb$. In view of
  \eqref{eq:infsup_Lqcf} and Lemma \ref{lem:infsup} this has a unique
  solution, and we have the stability bound
  \begin{displaymath}
    \| D \ub^{qcf} \|_{\ell^\infty_\eps} \leq \| D\ub \|_{\ell^\infty_\eps}
    + \| D\ub^D \|_{\ell^\infty_\eps} \leq \frac{2 \|\fb \|_*}{\phi_F''
      + 8\phi_{2F}''} + \Big|\frac{u_N^a - u_{-N}^a}{2N} \Big|. \qedhere
  \end{displaymath}
\end{proof}
\section{Convergence}
\label{sec:converge}

The quasicontinuum error $\eb^{qcf}=\ub^a - \ub^{qcf}$, where $\ub^a$
is again identified with its restriction to the computational domain
whenever necessary, satisfies the equation
\begin{equation*}
\begin{aligned}
  \left(L^{qcf}\eb^{qcf} \right)_j&=t_j,&\quad& j=-N+1,\dots,N-1,\\
  (\eb^{qcf})_{j}&=0,&\quad&j=-N,\,N.
\end{aligned}
\end{equation*}
Using (\ref{eq:qcf_const_1}) and (\ref{eq:poincare}) we see that the
truncation error ${\bf t}=L^{qcf}\ub^a-{\bf f}=(L^{qcf} - L^a) \ub^a$
(but with $t_N = t_{-N} = 0$) satisfies the negative norm estimate
\begin{displaymath}
\|{\bf t}\|_{*} = \sup_{\substack{\wb\in\Ds \\ \lpnorm{D \wb}{1} = 1}}
    \langle \, {\bf t},\,\wb \rangle \leq
    \sup_{\substack{\wb\in\Ds \\ \lpnorm{\wb}{\infty} = 1}}
    {\textstyle\frac12} \langle \, {\bf t},\,\wb \rangle = {\textstyle\frac12} \lpnorm{{\bf t}}{1}=
    {\textstyle \frac{1}{2}} \eps^2
  |\phi_{2F}''| \, \| \bar D^4 \ub^a \|_{\ell^1_\eps(\mathcal{C})}.
\end{displaymath}
However, we can get a slightly sharper result using the variational
representations of the operators $L^a$ and $L^{qcf}$ derived in
Section \ref{sec:conjugate}.

\begin{lemma}
  The truncation error satisfies the estimate
  \begin{displaymath}
    \| {\bf t} \|_* \leq 2 \eps^2 |\phi_{2F}''|
    \|D^3 \ub^a \|_{\ell^\infty_\eps(\widetilde{\mathcal{C}})},
  \end{displaymath}
  where $\widetilde{\mathcal{C}} = \{ -N+2, \dots, -K+1 \} \cup \{ K+2, \dots,
  N+1 \}$.
\end{lemma}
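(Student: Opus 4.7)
The plan is to exploit the weak representations derived in Section~\ref{sec:conjugate}, namely formula~\eqref{eq:La_weak} for $L^a$ and the decomposition $L^{qcf} = \phi_F'' L_1 + \phi_{2F}'' L_2$ with the identities from Lemma~\ref{th:ibp_lemma}. Writing the duality product $\langle \mathbf{t}, \wb \rangle = \langle (L^{qcf} - L^a)\ub^a, \wb \rangle$ using these weak forms has the advantage of shifting one difference quotient from $\ub^a$ onto the test function $\wb$, which is precisely what is needed to trade a bound on $\lpnorm{D^4 \ub^a}{1}$ for one on $\lpnorm{D^3 \ub^a}{\infty}$.

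Subtracting the two representations, I first observe that the nearest-neighbor contributions cancel identically, since both $\phi_F''$-terms yield $\sum_{j=-N+1}^N \eps\, Du_j^a Dw_j$. It then remains to analyze the $\phi_{2F}''$ terms. In the atomistic region $-K+1 \le j \le K$, the middle sum inside $\langle L_2^{reg} \ub^a, \wb\rangle$ coincides exactly with the corresponding part of the $L^a$ weak form, so these cancel as well. What survives are (i) the two continuum-region sums where $L^{qcf}$ contributes $4 Du_j^a Dw_j$ while $L^a$ contributes $(Du_{j+1}^a + 2 Du_j^a + Du_{j-1}^a)Dw_j$, and (ii) the two interface boundary terms $\eps^2 (D^3 u_{-K+1}^a)w_{-K}$ and $-\eps^2 (D^3 u_{K+2}^a)w_K$ coming from Lemma~\ref{th:ibp_lemma}.

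For the continuum-region sums I use the key algebraic identity
\begin{equation*}
Dv_{j+1} - 2 Dv_j + Dv_{j-1} = \eps^2 D^3 v_{j+1},
\end{equation*}
which rewrites $4 Du_j^a - (Du_{j+1}^a + 2 Du_j^a + Du_{j-1}^a) = -\eps^2 (D^3 u_{j+1}^a)$. As $j$ ranges over $\{-N+1,\dots,-K\}$ and $\{K+1,\dots,N\}$, the index $j+1$ ranges over $\widetilde{\mathcal{C}}$, so each such sum is bounded by $\eps^2 \|D^3\ub^a\|_{\ell^\infty_\eps(\widetilde{\mathcal{C}})}$ times the part of $\lpnorm{D\wb}{1}$ supported on the continuum region; together they contribute at most $\eps^2 \|D^3 \ub^a\|_{\ell^\infty_\eps(\widetilde{\mathcal{C}})} \lpnorm{D\wb}{1}$. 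For the two interface boundary terms I apply the Poincar\'{e} inequality~\eqref{eq:poincare} to bound $|w_{\pm K}| \leq \tfrac{1}{2}\lpnorm{D\wb}{1}$, which gives a further contribution of at most $\eps^2 \|D^3\ub^a\|_{\ell^\infty_\eps(\widetilde{\mathcal{C}})} \lpnorm{D\wb}{1}$.

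Summing the two contributions yields the factor $2$, so $|\langle \mathbf{t}, \wb\rangle| \le 2\eps^2 |\phi_{2F}''| \, \|D^3\ub^a\|_{\ell^\infty_\eps(\widetilde{\mathcal{C}})} \lpnorm{D\wb}{1}$, and taking the supremum over $\wb$ with $\lpnorm{D\wb}{1} = 1$ gives the claimed bound. Essentially everything is bookkeeping once the weak forms are lined up; the only conceptual point is the appearance of the \emph{boundary} terms $w_{\pm K}$ at the atomistic-continuum interface, which at first glance are not of divergence type and are therefore the main obstacle. These are exactly the interface contributions that were isolated in Lemma~\ref{th:ibp_lemma} and they are handled cleanly by Poincar\'{e} — this is where the restriction to the enlarged continuum index set $\widetilde{\mathcal{C}}$ (which includes the nearest layers $j = \pm K \pm 1, \pm K \pm 2$) becomes essential.
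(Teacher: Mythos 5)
Your proof is correct and is essentially the paper's own argument: both subtract the weak form \eqref{eq:La_weak} of $L^a$ from the representation of $L^{qcf}$ in Lemma~\ref{th:ibp_lemma}, observe that only the continuum-region sums (rewritten as $-\eps^2 D^3 u^a_{j+1}\,Dw_j$) and the two interface terms $w_{\pm K}$ survive, and bound these by a weighted H\"older inequality together with the Poincar\'e inequality \eqref{eq:poincare}, yielding the factor $1+2\cdot\tfrac12 = 2$. Nothing further is needed.
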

\begin{proof}
  Using the ``weak'' forms of $L^a$ and $L^{qcf}$ derived in
  (\ref{eq:La_weak}) and in Lemma \ref{th:ibp_lemma}, we obtain
  \begin{align*}
    \big\langle {\bf t}, \wb \big\rangle
    =~& \big\langle (L^{qcf} - L^a) \ub^a, \wb \big\rangle  \\
    =~& \phi_{2F}''\Bigg\{\sum_{j = -N+1}^{-K} \eps \big( - Du^a_{j-1} + 2Du^a_j - Du^a_{j+1} \big) D w_j  + \big( Du^a_{-K+1} - 2 Du^a_{-K} + Du^a_{-K-1} \big) w_{-K} \\
    &  + \sum_{j = K+1}^N \eps \big( - Du^a_{j-1} + 2Du^a_j - Du^a_{j+1} \big) Dw_j
    + \big( -Du^a_{K+2} + 2 Du^a_{K+1} - Du^a_{K} \big) w_{K} \Bigg\} \\
    \leq~& \eps^2 \| D^3\ub^a \|_{\ell^\infty_\eps(\widetilde{\mathcal{C}})} \big( \|D\wb \|_{\ell^1_\eps}
    + 2 \|\wb\|_{\ell^\infty_\eps} \big),
  \end{align*}
  where we used a weighted H\"{o}lder inequality in the last
  step. Using (\ref{eq:poincare}) to bound $\|\wb
  \|_{\ell^\infty_\eps}$ we obtain the stated bound.
\end{proof}

Combining this negative-norm truncation error estimate with the
stability estimate~\eqref{dirsta}, we obtain the following result.

\begin{theorem}
  Suppose that $\phi''_F + 8 \phi''_{2F} > 0.$ Then the atomistic
  problem (\ref{atom}) as well as the force-based quasicontinuum
  approximation (\ref{qcf}) have unique solutions, and they satisfy
  the error estimate
  \[
  \big\| D(\ub^a - \ub^{qcf}) \big\|_{\ell^\infty_\eps} \leq
  4 \eps^2\,\frac{|\phi_{2F}''| \| D^3 \ub^a \|_{\ell^\infty_\eps(\widetilde{\mathcal{C}})}}{
    \phi''_F+8\phi''_{2F}}.
  \]
\end{theorem}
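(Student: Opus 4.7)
The proof is essentially a synthesis: the heavy lifting has already been done in the stability theorem and in the preceding truncation-error lemma, so the plan is to combine them on the error equation.

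First I would establish existence and uniqueness. The assumption $\phi''_F + 8\phi''_{2F} > 0$ together with the inf-sup estimate \eqref{eq:infsup_Lqcf} and Lemma \ref{lem:infsup} already guarantees invertibility of $L^{qcf}$, hence a unique QCF solution $\ub^{qcf}$. For the atomistic problem, I would note that since $\phi''_{2F} < 0$, the hypothesis implies $\phi''_F + 4\phi''_{2F} > -4\phi''_{2F} > 0$, which by the earlier remark following \eqref{eq:defn_Ea} ensures $L^a$ is coercive and hence invertible, so $\ub^a$ is well-defined.

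Next I would set $\eb^{qcf} = \ub^a - \ub^{qcf}$ and derive the error equation. Because the QCF boundary values were specifically chosen as $u^{qcf}_{\pm N} = u^a_{\pm N}$, the error satisfies homogeneous Dirichlet conditions $e^{qcf}_{\pm N} = 0$, and
\[
L^{qcf} \eb^{qcf} = L^{qcf} \ub^a - \fb = L^{qcf} \ub^a - L^a \ub^a = {\bf t}
\]
on the interior. This is the crucial simplification that lets the boundary correction term in \eqref{dirsta} disappear.

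Then I would apply the stability theorem directly to $\eb^{qcf}$. Since $\eb^{qcf} \in \Ds$, the bound \eqref{dirsta} with right-hand side ${\bf t}$ and vanishing boundary data reduces to
\[
\big\| D \eb^{qcf} \big\|_{\ell^\infty_\eps} \leq \frac{2 \|{\bf t}\|_*}{\phi''_F + 8\phi''_{2F}}.
\]
Finally, I would insert the negative-norm truncation estimate from the preceding lemma, $\|{\bf t}\|_* \leq 2\eps^2 |\phi''_{2F}|\, \|D^3 \ub^a\|_{\ell^\infty_\eps(\widetilde{\mathcal{C}})}$, to obtain the stated bound with the factor $4$.

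There is no real obstacle here: both non-trivial ingredients (the inf-sup stability via the row-dominance lemma, and the sharpened negative-norm truncation estimate via the divergence form of $L^{qcf}-L^a$) are already in place, and the only subtlety to flag is the choice of boundary data that eliminates the affine correction term. I would briefly remark that, as a consequence, the error scales like $\eps^2$ provided $\|D^3 \ub^a\|_{\ell^\infty_\eps(\widetilde{\mathcal{C}})}$ stays bounded in the continuum limit, which is expected under the smoothness assumptions on $\fb$ in the continuum region discussed after \eqref{eq:qcf_const_1}.
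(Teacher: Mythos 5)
Your proposal is correct and follows exactly the paper's (largely implicit) argument: form the error equation $L^{qcf}\eb^{qcf}={\bf t}$ with homogeneous boundary data, apply the inf-sup stability bound \eqref{dirsta} (whose affine correction term vanishes precisely because $u^{qcf}_{\pm N}=u^a_{\pm N}$), and insert the negative-norm truncation estimate $\|{\bf t}\|_*\leq 2\eps^2|\phi_{2F}''|\,\|D^3\ub^a\|_{\ell^\infty_\eps(\widetilde{\mathcal{C}})}$ to obtain the factor $4$. Your additional observation that $\phi_F''+8\phi_{2F}''>0$ together with $\phi_{2F}''<0$ yields $\phi_F''+4\phi_{2F}''>0$, and hence well-posedness of the atomistic problem, is a correct filling-in of a detail the paper leaves to the reader.
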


As in Section~\ref{sec:model} we note again that it follows from the
interior regularity theory for elliptic finite difference
operators~\cite{thomee68} that $\| D^3 \ub^a
\|_{\ell^\infty_\eps(\widetilde{\mathcal{C}})}$ is bounded in the
continuum limit $\eps\to 0$, provided that $\fb$ is the restriction of
a smooth function in a neighborhood of the continuum region
$\widetilde{\mathcal{C}}$ to the lattice points.

\section{Estimates in Other Norms}\label{sec:norms}

We conclude this paper by showing that our choice of norms with
respect to which we analyzed the stability of the force-based QC
approximation was, in some sense, unique.
\begin{theorem}
  \label{th:noinfsup}
  Suppose that $\phi_F'' > 0,$ $\phi_{2F}'' \in \mathbb{R} \setminus
  \{0\},$ and that $1\leq p < \infty,$ and $1 < q \leq \infty$ so that
  $\frac{1}{p} + \frac{1}{q} = 1.$ Then there exists a constant $C>0$
  such that, for $2 \leq K \leq N /
  2$,
  \begin{equation*}
\inf_{\substack{\vb\in\Ds \\ \lpnorm{D \vb}{p} = 1}} \
\sup_{\substack{\wb\in\Ds \\ \lpnorm{D \wb}{q} = 1}}
    \langle  L^{qcf} \vb,\,\wb \rangle
    \le C N^{-1/p}.
    \end{equation*}
\end{theorem}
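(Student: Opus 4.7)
The plan is to convert the inf-sup quantity into a question about the conjugate operator $E^{qcf}$, and then to exhibit an explicit test function realising the claimed decay rate.

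First, using the divergence form $\langle L^{qcf}\vb, \wb\rangle = \langle E^{qcf}D\vb, D\wb\rangle$ from Section~\ref{sec:conjugate}, together with the bijection $D:\Ds\to\mvz$ and the duality between $(\mvz, \lpnorm{\cdot}{q})$ and the quotient space $(\Real^{2N}/\langle\oneb\rangle, \lpnorm{\cdot}{p})$ (valid since $\xib\in\mvz$ is equivalent to $\sum_j \xi_j = 0$), the inner supremum simplifies to
\begin{equation*}
\sup_{\wb\in\Ds,\, \lpnorm{D\wb}{q}=1} \langle L^{qcf}\vb, \wb\rangle = \inf_{c\in\Real}\lpnorm{E^{qcf}D\vb - c\oneb}{p}.
\end{equation*}
The theorem therefore reduces to constructing $\xib \in\mvz$ with $\lpnorm{\xib}{p}$ bounded below uniformly in $N$, together with a scalar $c\in\Real$, such that $\lpnorm{E^{qcf}\xib - c\oneb}{p} \leq C\,N^{-1/p}$.

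Such a $\xib$ should be built by exploiting the ``ghost-force'' nonlocality already exhibited in Section~\ref{sec:nocoerc}. Every continuum row of $E^{qcf}$ in~\eqref{eqcfd} contains the same $\phi_{2F}''\cdot(1,-2,1)$ stencil coupled to the interface strain indices $(-K,-K+1,-K+2)$ and its mirror on the right, so a suitable strain perturbation concentrated at the interface shifts $E^{qcf}\xib$ by a common value on all $O(N)$ continuum rows. Combining such an interface-localized perturbation (in the spirit of the ``bad'' direction $\vb^-$ in Lemma~\ref{th:cocoerc_lemma}) with a globally mean-zero background, one can choose $\xib$ so that this shift nearly cancels the local diagonal response on the bulk of the continuum, leaving $E^{qcf}\xib$ equal to a common constant on $O(N)$ rows, with a residual supported on only $O(K)$ near-interface indices.

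Taking $c$ equal to that common bulk value then reduces $\lpnorm{E^{qcf}\xib - c\oneb}{p}^p$ to a sum over $O(K)$ terms, yielding a factor of $\eps K\leq 1$; after dividing by the $O(1)$ normalization of $\lpnorm{\xib}{p}$, this gives the required $O(N^{-1/p})$ decay. The argument must (and does) break down as $p\to\infty$, consistent with the uniform stability of Section~\ref{sec:stability}: the factor $(\eps K)^{1/p}$ no longer shrinks in the $\ell^\infty_\eps$-limit.

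The main obstacle is the precise calibration of $\xib$. Simple ansatze (for instance taking $\vb$ piecewise linear, so that $\xib = D\vb$ is a step function across the interface) produce a ratio of order $O(1)$ rather than $O(N^{-1/p})$, because the bulk response on the continuum is not cancelled. Matching the ghost-mediated and local contributions of $E^{qcf}\xib$ on the bulk continuum, while keeping $\lpnorm{\xib}{p}$ uniformly bounded below, is the delicate part of the proof.
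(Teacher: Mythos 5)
Your reduction to the conjugate operator is the paper's first step, and your duality identity $\sup_{\wb}\langle L^{qcf}\vb,\wb\rangle = \inf_{c\in\Real}\lpnorm{E^{qcf}D\vb - c\oneb}{p}$ is correct (the paper uses the cruder H\"older bound with $c=0$, which already suffices because the construction makes $E^{qcf}\xib$ itself small). Your structural picture --- every continuum row of $E^{qcf}$ carries the same $(1,-2,1)$ coupling to the interface strains, so an interface-localized perturbation can be calibrated to cancel the bulk continuum response --- is also exactly the right mechanism. But the argument has a genuine gap. You never produce the test function: you explicitly flag ``the precise calibration of $\xib$'' as the delicate missing step, and that calibration is the entire content of the paper's lemma. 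The paper's choice is $\tilde\xi_j = -1$ for $j=-N+1,\dots,-K-1$, $\tilde\xi_{-K}=-\alpha$, $\tilde\xi_j = 0$ for $j=-K+1,\dots,K$, mirrored with opposite sign on the right, with $\alpha = (\phi_F''+5\phi_{2F}'')/(2\phi_{2F}'')$ chosen so that $\phi_F'' + (5-2\alpha)\phi_{2F}''=0$, which annihilates all $O(N)$ continuum rows simultaneously.

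More seriously, your quantitative accounting does not deliver the theorem. You claim the residual is supported on $O(K)$ indices, so that $\lpnorm{E^{qcf}\xib-c\oneb}{p}^p\sim\eps K$ and the inf-sup constant is $O\big((\eps K)^{1/p}\big)$. But $(\eps K)^{1/p}=(K/N)^{1/p}$, and the theorem must hold for all $2\le K\le N/2$: when $K$ is proportional to $N$ your bound is $O(1)$, not $O(N^{-1/p})$. In the correct construction the residual $E^{qcf}\tilde\xib$ is supported on a \emph{bounded} number of entries (four, at $j=-K,-K+1,K,K+1$), each of size $O(1)$ independent of $K$ and $N$, so $\lpnorm{E^{qcf}\tilde\xib}{p}=O(\eps^{1/p})=O(N^{-1/p})$, while $\lpnorm{\tilde\xib}{p}^p\ge 2\eps(N/2-1+|\alpha|^p)$ is bounded below uniformly. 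Your closing remark that the factor $(\eps K)^{1/p}$ explains the consistency with the uniform $\ell^\infty_\eps$--$\ell^1_\eps$ stability is likewise off: the correct factor is $\eps^{1/p}$, with no $K$-dependence.
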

\begin{proof}
  We recall from Sections \ref{sec:conjugate} and \ref{sec:stability}
  that
\begin{equation*}
\inf_{\substack{\vb\in\Ds \\ \lpnorm{D \vb}{p} = 1}} \
\sup_{\substack{\wb\in\Ds \\ \lpnorm{D \wb}{q} = 1}}
    \langle  L^{qcf} \vb,\,\wb \rangle
=
\inf_{\substack{\xib \in \mvz \\ \lpnorm{\xib}{p} = 1}}
\sup_{\substack{\etab \in \mvz \\ \lpnorm{\etab}{q} = 1}}
    \langle  E^{qcf} \xib ,\,\etab\rangle
\leq
\inf_{\substack{\xib \in \mvz \\ \lpnorm{\xib}{p} = 1}}
   \big\| E^{qcf} \xib \big\|_{\ell^p_\eps},
\end{equation*}
where the second step follows from H\"older's inequality.  Therefore,
we obtain the stated result from the following lemma.
\end{proof}

\begin{lemma}
  Under the conditions of Theorem \ref{th:noinfsup} there exists a
  constant $C > 0$ such that, for $2 \leq K \leq N / 2,$
  \begin{equation*}
  \inf_{\substack{\xib \in \mvz \\ \lpnorm{\xib}{p} = 1}}
  \big\| E^{qcf} \xib \big\|_{\ell^p_\eps} \leq C N^{-1/p}.
   \end{equation*}
\end{lemma}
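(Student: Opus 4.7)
The plan is to exhibit an explicit $\xib \in \mvz$ for which $\lpnorm{E^{qcf}\xib}{p}/\lpnorm{\xib}{p} = O(N^{-1/p})$, by constructing an approximate null vector of $E^{qcf}$ whose image is concentrated on only two rows, located at the atomistic--continuum interfaces $j = \pm K$. The key to making the ratio genuinely small (rather than merely bounded) is that the right-hand side contributes $O(N^{-1/p})$ through the $\eps^{1/p}$ factor of two isolated entries, while the left-hand side picks up an $O(1)$ contribution from a ``constant plateau'' on the continuum halves.

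The construction has two ingredients: a boundary layer in the atomistic region and constant plateaus of opposite sign on the two continuum halves. Let $\lambda$ be a root of $\lambda^2 - \mu\lambda + 1 = 0$ with $\mu = -(\phi_F''+2\phi_{2F}'')/\phi_{2F}''$, and define $\xi_j = A\lambda^{K-j}$ for $0 < j \leq K$, $\xi_j = -A\lambda^{K+j}$ for $-K \leq j < 0$, and $\xi_0 = 0$; antisymmetry together with the identity $\lambda + \lambda^{-1} = \mu$ implies that $\xib$ satisfies the atomistic recurrence $\xi_{j-1} + \xi_{j+1} = \mu\xi_j$ at every $|j| \leq K-1$. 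On the continuum part, including the right interface columns $K+1$ and $K+2$, set $\xi_j = \kappa A$ on the right and $\xi_j = -\kappa A$ on the left, with $\kappa := -\phi_{2F}''/(\phi_F''+3\phi_{2F}'')$. Antisymmetry of $\xib$ immediately gives $\xib \in \mvz$.

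Next I would verify row by row from (\ref{eqcfd}) that $(E^{qcf}\xib)_j = 0$ for every $j$ with $|j| \neq K$. The atomistic rows $|j| \leq K-1$ vanish by the recurrence (the case $j=0$ by antisymmetry). A direct computation shows that each of the three transitional rows $j \in \{K+1, K+2, K+3\}$, whose $\phi_{2F}''$-patterns in (\ref{eqcfd}) are $(1,2,1)$, $(1,-2,5)$ and $(1,-2,1,4)$ at consecutive columns starting at $K$, and each bulk continuum row $j \geq K+4$ (together with their mirror images on the left) collapses to the single expression $A[\kappa(\phi_F''+3\phi_{2F}'') + \phi_{2F}'']$, which vanishes by the defining relation for $\kappa$. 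The only non-zero entries of $E^{qcf}\xib$ are therefore at $j = \pm K$, where the atomistic recurrence fails because $\xi_{K+1} = \kappa A$ rather than the geometric continuation $A\lambda^{-1}$; a short computation gives $(E^{qcf}\xib)_{\pm K} = \pm A\beta$ with $\beta := \phi_F'' + \phi_{2F}''(\lambda + 2 + \kappa)$ a constant independent of $N$.

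Finally, $\lpnorm{E^{qcf}\xib}{p} = (2\eps)^{1/p}|\beta|A = 2^{1/p}|\beta|A N^{-1/p}$, while the bulk continuum contribution alone yields $\lpnorm{\xib}{p}^p \geq 2(N-K-2)\eps|\kappa A|^p \geq \tfrac{1}{2}|\kappa A|^p$ since $K \leq N/2$ (and $N$ is not too small), so $\lpnorm{\xib}{p} \geq 2^{-1/p}|\kappa|A$; dividing yields the lemma with $C = 4|\beta|/|\kappa|$. The main obstacle is the row-by-row verification of the unified cancellation across all transitional and bulk continuum rows, which depends sensitively on the structure of the three transitional rows in (\ref{eqcfd}) and on the exact value of $\kappa$. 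A secondary issue arises when $|\mu| < 2$ and the roots $\lambda$ are complex on the unit circle; one then passes to the real (or imaginary) part of $\xib$, and since $|\xi_j|$ becomes $O(1)$ uniformly across the atomistic region the lower bound on $\lpnorm{\xib}{p}$ only improves.
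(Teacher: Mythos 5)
Your overall strategy is exactly the paper's: exhibit an explicit $\xib\in\mvz$ with $O(1)$ plateaus of opposite sign on the two continuum halves, chosen so that $E^{qcf}\xib$ is supported on $O(1)$ many entries of size $O(1)$, whence the numerator is $O(\eps^{1/p})=O(N^{-1/p})$ while the denominator is bounded below by the plateau mass. However, your execution has concrete flaws. First, the claim that antisymmetry plus $\lambda+\lambda^{-1}=\mu$ makes the atomistic recurrence hold at every $|j|\le K-1$ is false at $j=\pm1$: with $\xi_0=0$ and $\xi_j=A\lambda^{K-j}$ for $j>0$, the row at $j=1$ produces $\xi_0+\xi_2-\mu\xi_1=-A\lambda^{K}\neq 0$, so $E^{qcf}\xib$ is \emph{not} supported only at $j=\pm K$. (A pure geometric glued antisymmetrically to zero at the origin is not a solution of the three-term recurrence there; you would need the odd combination $\lambda^{j}-\lambda^{-j}$, which then destroys your normalization $\xi_K=A$.) This particular error is survivable --- provided you select the root with $|\lambda|\le1$, the two spurious residuals are still $O(A)$ and the final bound goes through --- but the ``row-by-row verification'' you identify as the main content of the proof is wrong as stated, and with the root $|\lambda|>1$ the residual $A\lambda^K$ grows with $N$ and the bound fails. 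Second, $\kappa=-\phi_{2F}''/(\phi_F''+3\phi_{2F}'')$ is undefined when $\phi_F''+3\phi_{2F}''=0$, a case permitted by the hypotheses $\phi_F''>0$, $\phi_{2F}''\neq0$; your constant $C=4|\beta|/|\kappa|$ also degenerates there. Third, the degenerate cases $\mu=\pm2$ (where $\lambda=\pm1$ is a double root) are not covered, and in the complex case ``passing to the real part'' breaks both $\xi_0=0$ and $\xi_K=A$, so that step does not actually repair the construction.

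All of this machinery is unnecessary, and the paper's proof shows why: it takes $\tilde\xi_j=0$ throughout the atomistic region, $\tilde\xi_j=\pm1$ on the continuum halves, and a single free value $\pm\alpha$ at the interface entries $j=\mp K,\,\pm(K+1)$. The atomistic rows then vanish trivially (no boundary layer, no recurrence, no root selection), and the one parameter $\alpha=(\phi_F''+5\phi_{2F}'')/(2\phi_{2F}'')$ kills all $O(N)$ continuum rows at once; the division is by $2\phi_{2F}''$, which is nonzero by hypothesis, so no degenerate subcase arises. The residual then sits in $O(1)$ interface entries and the same norm comparison you perform at the end finishes the proof. I would encourage you to rework your argument along these lines, or else carefully repair the three issues above (fix the $j=\pm1$ rows, specify $|\lambda|\le1$, and treat the cases $\phi_F''+3\phi_{2F}''=0$ and $\mu=\pm2$ separately).
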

\begin{proof}
  The terms causing this effect are the nonlocal terms extending from
  the atomistic to continuum interface to the boundary.  Hence, we
  define
\begin{equation*}
\tilde{\xi}_j = \begin{cases}
-1, & j = -N+1,\dots,-K-1,\\
-\alpha, & j = -K, \\
\hphantom{-}0, & j = -K+1,\dots,K,\\
\hphantom{-}\alpha, & j = K+1, \\
\hphantom{-}1, & j = K+2,\dots,N,
\end{cases}
\end{equation*}
where $\alpha \in \Real$ will be specified below, and $\xib = \tilde
\xib / \|\tilde \xib \|_{\ell^p_\eps}$. Recalling the matrix
representation \eqref{eqcfd} for $E^{qcf}$, we see that
\begin{equation*}
E^{qcf} \tilde{\xib}
= \phi''_F \tilde{\xib} + \phi''_{2F}
\begin{cases}
-5 + 2\alpha, & j = -N+1,\dots,-K-1,\\
-1 - 2\alpha, & j = -K, \\
\hphantom{-12}-\alpha, & j = -K+1,\\
\hphantom{- 1 - 4}0, & j = -K+2,\dots,K-1,\\
\hphantom{- 1 - 5}\alpha, & j =  K,\\
\hphantom{-}1 + 2\alpha, & j =  K+1,\\
\hphantom{-}5 - 2\alpha, & j =  K+2,\dots,N,
\end{cases}
\end{equation*}
from which we obtain
\begin{equation*}
\begin{split}
\big\| E^{qcf} \tilde\xib \big\|_{\ell^p_\eps}^p
&= 2 \eps \Big(  |\alpha \phi''_{2F}|^p
+ |\alpha \phi''_F + (1+2\alpha)\phi''_{2F}|^p \\
&\qquad \qquad  + (N-K-1) |\phi''_F + (5-2\alpha)\phi''_{2F}|^p  \Big).
\end{split}
\end{equation*}

Choosing $\alpha = \frac{\phi''_F + 5 \phi''_{2F}}{2 \phi''_{2F}}$,
and thereby canceling the term $(N-K-1) |\phi''_F +
(5-2\alpha)\phi''_{2F}|^p$ above, gives
\begin{equation*}
\begin{split}
  \big\| E^{qcf} \tilde\xib \big\|_{\ell^p_\eps}^p
= 2 \eps \Big(\big| \alpha \phi''_{2F}\big|^p
 + \big| \alpha \phi''_F + (1+2\alpha) \phi''_{2F}\big|^p \Big).
\end{split}
\end{equation*}
Moreover, since
\begin{equation*}
  \big\| \tilde\xib \big\|_{\ell^p_\eps}^p
  = 2 \eps (N-K-1 + |\alpha|^p) \geq 2 \eps( N/2 - 1 + |\alpha|^p ),
\end{equation*}
we conclude that
\begin{equation*}
\inf_{\substack{\xib \in \mvz \\ \lpnorm{\xib}{p} = 1}}
  \big\| E^{qcf} \xib \big\|_{\ell^p_\eps}
\leq \left(\frac{\big| \alpha \phi''_{2F}\big|^p
 +  \big| \alpha \phi''_F + (1+2 \alpha) \phi''_{2F}\big|^p }
{N/2 - 1 + |\alpha|^p}\right)^{1/p}\leq C N^{-1/p}. \qedhere
\end{equation*}
\end{proof}

\section*{Conclusion}

We have presented a detailed stability and error analysis of the
force-based QC method in one dimension. Although we were able to
establish optimal order error estimates, we have also presented several
``negative'' results which are, in many respects, even more
interesting.  The present paper has focused exclusively on the force-based
QC method, but we expect that the lack of coercivity (and more generally
lack of stability in most norms) may be present in other force-based
coupling methods such as \cite{kohlhoff,shilkrot} or the QM-MM
coupling methods described in \cite{hybrid_review}.  A careful study
of these related methods is required to further understand and
establish force-based coupling techniques as predictive tools in
computational physics.

Finally, let us remark on the the fact that we have only proven
stability of the QCF method under the condition that $\phi_F'' + 8
\phi_{2F}'' > 0$. By contrast, the atomistic model is uniformly stable
if, {\em and only if} $\phi_F'' + 4 \phi_{2F}'' > 0$
\cite{sharpstability}. Hence, we expect that our above condition is
not sharp. Although we have established sharp characterizations of the
stability of other QC methods in \cite{sharpstability}, we were unable
to rigorously achieve the same for the force-based QC method as well.
While our computational results reported in \cite{sharpstability} do
indicate that $L^{qcf}$ has only positive eigenvalues if $\phi_F'' + 4
\phi_{2F}'' > 0,$ this is not enough to ensure stability, uniformly as
$N \to \infty$, in the regime $\phi_F'' + 4\phi_{2F}'' > 0$ for the
$W^{1,\infty}$-$W^{1,1}$ ``duality pairing'' or for any other norm
with a recognizable continuum limit.

}
\end{document}